\newtheorem{Theorem}{Theorem}[section]
\newtheorem{defn}[Theorem]{Definition}
\newtheorem{prp}[Theorem]{Proposition}
\newtheorem{lemma}[Theorem]{Lemma}
\newtheorem{Example}[Theorem]{Example}
\newtheorem{rmk}[Theorem]{Remark}
\title[On Complex Intuitionistic Fuzzy Lie Sub-superalgebras]
{On Complex Intuitionistic Fuzzy Lie Sub-superalgebras}
\author[Ameer Jaber]{}
 \email{ameerj@hu.edu.jo}
\keywords{complex intuitionistic fuzzy set, complex intuitionistic fuzzy Lie sub-superalgebra, complex intuitionistic fuzzy ideal, Lie superalgebras}
\begin{document}

\maketitle

\centerline{\scshape  Ameer Jaber}
\medskip

{\footnotesize \centerline{ Department of Mathematics }
\centerline{ The Hashemite University} \centerline{ Zarqa 13115, Jordan
} }
\begin{abstract}
A complex intuitionistic fuzzy Lie superalgebra is a generalization of intuitionistic fuzzy Lie superalgebra whose membership function takes values in the unit disk in the complex plane. In this research, the concepts of complex intuitionistic fuzzy sets are introduced to Lie superalgebras. We define the complex intuitionistic fuzzy Lie sub-superalgebras and complex intuitionistic fuzzy ideals of Lie superalgebras. Then, we study some related properties of complex intuitionistic fuzzy Lie sub-superalgebras and complex intuitionistic fuzzy ideals. Finally, we define the image and preimage of complex intuitionistic fuzzy Lie sub-superalgebra under Lie superalgebra anti-homomorphism. The properties of anti-complex intuitionistic fuzzy Lie sub-superalgebras and anti-complex intuitionistic fuzzy ideals under anti-homomorphisms of Lie superalgebras are investigated.\\ \\
\textbf{AMS classification}: 08A72, 03E72, 20N25.
\end{abstract}
\section{Introduction}
The notion of intuitionistic fuzzy sets was introduced by Atanassov (see \cite{A0}). He presented in \cite{A0} the idea of intuitionistic fuzzy sets. He also in \cite{A} defined some properties of intuitionistic fuzzy sets. Atanassov presented in \cite{A1} interesting new operations about intuitionistic fuzzy sets. An intuitionistic fuzzy set is the generalization of fuzzy set. Recently, Biswas applied the concepts of intuitionistic fuzzy sets to the theory of groups and studied intuitionistic fuzzy subgroups of a group (see \cite{RB}); also Banerjee studied intuitionistic fuzzy subrings and ideals of a ring (see \cite{BB}). Moreover, Jun investigated the concept of intuitionistic nil-radicals of intuitionistic fuzzy ideals in rings (see \cite{YB}) and Davvaz, Dudek and Jun applied the notion of intuitionistic fuzzy sets to certain types of modules (see \cite{BD}). Then in \cite{CZ} W. chen and S. Zhang introduced the concept of intuitionistic fuzzy Lie superalgebras and intuitionistic fuzzy ideals. It is known that fuzzy sets are intuitionistic fuzzy sets but the converse is not necessarily true (for more details see \cite{A}). More recently, Alkouri and Salleh gave in \cite{AS} the idea of complex intuitionistic fuzzy subsets and then they enlarge the basic properties of it. This concept became more effective and useful in scientific field because it deals with degree of membership and nonmembership in complex plane. They also initiated the concept of complex intuitionistic fuzzy relation and developed fundamental operation of complex intuitionistic fuzzy sets in \cite{AS1, AS2}. Then Garg and Rani made in \cite{GR} a huge effort to generalize the notion of complex intuitionistic fuzzy sets in decision-making problems.\\
In \cite{SS-1}, S. Shaqaqha introduced the concepts of complex fuzzy sets to the theory of Lie algebras and studied complex fuzzy Lie subalgebras. Furthermore, in \cite{SS-2, SS-6}, S. Shaqaqha and M. Al-Deiakeh  introduced the concepts of complex intuitionistic fuzzy Lie algebras and complex intuitionistic fuzzy Lie ideals and they study the relation between complex intuitionistic fuzzy Lie subalgebras (ideals) and intuitionistic fuzzy Lie subalgebras (ideals). Also, in \cite{SS-3}, S. Shaqaqha characterized Noetherian and Artinian Gamma rings by complex fuzzy ideals. Moreover, in \cite{SS-4}, he introduced the notion of intuitionistic fuzzy Lie subalgebras and intutionistic fuzzy Lie ideals of $n$-Lie algebras which is a generalization of intuitionistic fuzzy Lie algebras. More recently, in \cite{SS-5}, he introduced the concept of complex fuzzy $\Gamma$-rings and he showed that there are isomorphism theorems concerning complex fuzzy $\Gamma$-rings as for rings.\\
I must point out here that the main idea of this article is to introduce the concepts of complex intuitionistic fuzzy Lie superalgebras and complex intuitionistic fuzzy ideals, which are generalizations of intuitionistic fuzzy Lie superalgebras and intuitionistic fuzzy ideals applied by W. chen and S. Zhang in \cite{CZ}. We prepared this paper as follows: In section 2, we recall some basic definitions and notions which will be used in what follows. In section 3, we introduce the definition of $\mathbb{Z}_2$-graded complex intuitionistic fuzzy  vector subspace, define complex intuitionistic fuzzy Lie sub-superalgebras, complex intuitionistic fuzzy ideals and consider their characterization. Finally, in section 4, we discuss images and preimages of complex intuitionistic fuzzy Lie sub-superalgebras and complex intuitionistic fuzzy ideals under anti-homomorphisms.
\section{Complex intuitionistic fuzzy sets}
Let $X\not=\phi$. A complex intuitionistic fuzzy set on $X$ is an object having the form $A=\{(x,\lambda_A(x),\rho_A(x))\ | x\in X\}$, where the complex functions $\lambda_A : X\rightarrow \mathbb{C}$ and $\rho_A : X\rightarrow \mathbb{C}$ denote the degree of membership (namely $\lambda_A(x)$) and the degree of non-membership (namely $\rho_A(x)$) of each element $x\in X$ to the set $A$, respectively, that assign to any element $x\in X$ complex numbers $\lambda_A(x)$, $\rho_A(x)$ lie within the unit circle with the property $|\lambda_A(x)|+|\rho_A(x)|\leq 1$.
For the sake of simplicity, we shall use the symbol $A=(\lambda_A, \rho_A)$ for the complex intuitionistic fuzzy set $A=\{(x,\lambda_A(x),\rho_A(x))\ | x\in X\}$.\\
We shall assume $\lambda_A(x)$, $\rho_A(x)$ will be represented by $r_A(x)e^{i2\pi \omega_A(x)}$ and $\hat{r}_A(x)e^{i2\pi \hat{\omega}_A(x)}$, respectively, where $i=\sqrt{-1} $, $r_A(x), \hat{r}_A(x), \omega_A(x), \hat{\omega}_A(x) \in [0,1]$. Thus the property of $|\lambda_A(x)|+|\rho_A(x)|\leq 1$ implies $r_A(x)+\hat{r}_A(x)\leq1$. Note that the intuitionistic fuzzy set is a special case of complex intuitionistic fuzzy set with $\omega_A(x)=\hat{\omega}_A(x)=0.$
Also, if $\rho_A(x)=(1-r_A(x))e^{i2\pi(1-\omega_A(x))}$, then we obtain a complex fuzzy set.
Let $\alpha e^{i2\pi\beta}$ and $\gamma e^{i2\pi\delta}$ be two complex numbers, where $\alpha, \beta, \gamma, \delta\in [0,1]$. By $\alpha e^{i2\pi\beta}\leq \gamma e^{i2\pi\delta}$ we mean $\alpha\leq\gamma$ and $\beta\leq\delta$.
In this paper,we use the symbols $a\wedge b=min\{a,b\}$ and $a\vee b=max\{a,b\}$.
Let $A=(\lambda_A,\rho_A)$ be a complex intuitionistic fuzzy set on $X$ with the degree of membership $\lambda_A(x)=r_A(x)e^{i2\pi \omega_A(x)}$ and the degree of non-membership $\rho_A(x)=\hat{r}_A(x)e^{i2\pi\hat{\omega}_A(x)}$. Then $A$ is said to be a homogeneous complex intuitionistic fuzzy set if the following two conditions hold $\forall x,y\in X$\\
(1) $r_A(x)\leq r_A(y)$ if and only if $\omega_A(x)\leq\omega_A(y)$,\\
(2) $\hat{r}_A(x)\leq \hat{r}_A(y)$ if and only if $\hat{\omega}_A(x)\leq\hat{\omega}_A(y)$.\\
Let $A=(\lambda_A,\rho_A)$ and $B=(\lambda_B,\rho_B)$ be two complex intuitionistic fuzzy sets on the same set $X$, we say that $A$ is homogeneous with $B$ if the following conditions hold $\forall x,y\in X$\\
(1) $r_A(x)\leq r_B(y)$ if and only if $\omega_A(x)\leq\omega_B(y)$,\\
(2) $\hat{r}_A(x)\leq \hat{r}_B(y)$ if and only if $\hat{\omega}_A(x)\leq\hat{\omega}_B(y)$.\\
\begin{defn}
Let $K$ be any field, and let $V$ be a $K$-vector space. A complex intuitionistic fuzzy (CIF for short) set on $V$ defined as an object having the form $A=\{(x,\lambda_A(x),\rho_A(x))\ | x\in V\}$, where the complex functions $\lambda_A : V\rightarrow \mathbb{C}$ and $\rho_A : V\rightarrow \mathbb{C}$ denote the degree of membership (namely $\lambda_A(x)$) and the degree of non-membership (namely $\rho_A(x)$) of each element $x\in V$ to the set $A$, respectively, that assign to any element $x\in V$ complex numbers $\lambda_A(x)$, $\rho_A(x)$ lie within the unit circle with the property $|\lambda_A(x)|+|\rho_A(x)|\leq 1$.
\end{defn}
We shall use the symbol $A=(\lambda_A, \rho_A)$ for the CIF set $A=\{(x,\lambda_A(x),\rho_A(x))\ | x\in X\}$.
\begin{defn}
Let $A=(\lambda_A,\rho_A)$ and $B=(\lambda_B,\rho_B)$ be CIF vector subspaces of a vector subspace $V$. Then\\
(1) $A\subseteq B$ if $\lambda_A(x)\leq\lambda_B(x)$ and $\rho_A(x)\geq\rho_B(x)$,\\
(2) $A\cap B=\{x,\lambda_A(x)\wedge\lambda_B(x),\rho_A(x)\vee\rho_B(x)|x\in V\}$,\\
(3) $A\cup B=\{x,\lambda_A(x)\vee\lambda_B(x),\rho_A(x)\wedge\rho_B(x)|x\in V\}$\\
\end{defn}
\begin{defn}
Let $V$ be a $K$-vector space. A CIF set $A=(\lambda_A, \rho_A)$ of a vector space $V$ is called a CIF vector space of $V$, if it satisfies the following conditions\\
for any $x,y\in V$, $\alpha\in K$\\
(1) $\lambda_A(x+y)\geq \lambda_A(x)\wedge\lambda_A(y)$, and $\rho_A(x+y)\leq \rho_A(x)\vee\rho_A(y)$\\
(2) $\lambda_A(\alpha x)\geq \lambda_A(x)$, and $\rho_A(\alpha x)\leq \rho_A(x)$.
\end{defn}
From this definition, we know that for any $x\in V$, $\lambda_A(0)\geq\lambda_A(x)$ and $\rho_A(0)\leq\rho_A(x)$. In this paper, we always assume that $\lambda_A(0)=1e^{i2\pi}=1$ and $\rho_A(0)=0e^{i2(\pi)0}=0$.
\begin{defn}\label{def-1}
Let $A=(\lambda_A,\rho_A)$ and $B=(\lambda_B,\rho_B)$ be CIF vector subspaces of a vector subspace $V$, where $\lambda_A=r_Ae^{i2\pi\omega_A},\lambda_B=r_Be^{i2\pi\omega_B}$and $\rho_A=\hat{r}_Ae^{i2\pi\hat{\omega}_A}, \rho_B=\hat{r}_Be^{i2\pi\hat{\omega}_B}$. If $A$ is homogenous with $B$. Then the complex intuitionistic sum of $A=(\lambda_A,\rho_A)$ and $B=(\lambda_B,\rho_B)$ is defined to the CIF set $A+B=(\lambda_{A+B},\rho_{A+B})$ of $V$ given by
$$\lambda_{A+B}(x)=\begin{cases} \sup_{x=a+b}\{(r_A(a)\wedge r_B(b))\}e^{i2\pi\sup_{x=a+b}\{ (\omega_A(a)\wedge\omega_B(b))\}} & :\quad {\rm if}\ x=a+b\\
0 & :\quad otherwise,\end{cases}$$
$$\rho_{A+B}(x)=\begin{cases} \inf_{x=a+b}\{(\hat{r}_A(a)\vee\hat{r}_B(b))\}e^{i2\pi\inf_{x=a+b}\{ (\hat{\omega}_A(a)\vee\hat{\omega}_B(b))\}} & :\quad {\rm if}\ x=a+b\\
1 & :\quad otherwise,\end{cases}$$
\end{defn}
Further, if $A\cap B=(\lambda_{A\cap B},\rho_{A\cap B})$, where
$$\lambda_{A\cap B}(x)=\begin{cases} 0 & :\quad x\not=0\\
1 & :\quad x=0,\end{cases}\ \ \ {\rm and}\ \ \
\rho_{A\cap B}(x)=\begin{cases} 1 & :\quad x\not=0\\
0 & :\quad x=0,\end{cases}.$$ Then $A+B$ is said to be the direct sum and denoted by $A\oplus B$.
\begin{lemma}\label{mylemma-1}\textnormal{
Let $A=(\lambda_A,\rho_A)$ and $B=(\lambda_B,\rho_B)$ be CIF vector subspaces of a vector subspace $V$ such that $A$ is homogenous with $B$. Then $A+B=(\lambda_{A+B},\rho_{A+B})$ is also a CIF vector subspaces of a vector subspace $V$.}
\end{lemma}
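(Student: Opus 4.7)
The plan is to verify the two defining conditions of a CIF vector subspace for $A+B=(\lambda_{A+B},\rho_{A+B})$. Writing $\lambda_{A+B}(x)=r_{A+B}(x)e^{i2\pi\omega_{A+B}(x)}$ and $\rho_{A+B}(x)=\hat r_{A+B}(x)e^{i2\pi\hat\omega_{A+B}(x)}$, I intend to handle moduli and arguments separately, then reassemble the two real inequalities into a single complex inequality via the componentwise partial order on $\mathbb{C}$ introduced just before Definition~\ref{def-1}.

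For the additive condition on $\lambda_{A+B}$, fix $x,y\in V$ and arbitrary decompositions $x=a_{1}+b_{1}$, $y=a_{2}+b_{2}$. Since $x+y=(a_{1}+a_{2})+(b_{1}+b_{2})$ is itself a decomposition of $x+y$, the definition of $\lambda_{A+B}$ together with the CIF vector-space property of $A$ and $B$ gives
$$r_{A+B}(x+y)\;\geq\;r_{A}(a_{1}+a_{2})\wedge r_{B}(b_{1}+b_{2})\;\geq\;\bigl(r_{A}(a_{1})\wedge r_{B}(b_{1})\bigr)\wedge\bigl(r_{A}(a_{2})\wedge r_{B}(b_{2})\bigr),$$
and the analogous chain holds for $\omega_{A},\omega_{B}$. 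Using the elementary identity $\sup_{\alpha}\min\{f(\alpha),c\}=\min\{\sup_{\alpha}f(\alpha),c\}$ for constants $c$, successive suprema over decompositions of $x$ and then $y$ produce $r_{A+B}(x+y)\geq r_{A+B}(x)\wedge r_{A+B}(y)$ and likewise for the $\omega$-component. Combining these componentwise yields $\lambda_{A+B}(x+y)\geq\lambda_{A+B}(x)\wedge\lambda_{A+B}(y)$. The corresponding statement for $\rho_{A+B}$ is proved dually: infima replace suprema, maxima replace minima, and the inequalities flip.

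The scalar condition is lighter: for $\alpha\in K$ and any decomposition $x=a+b$ one has $\alpha x=(\alpha a)+(\alpha b)$, and the CIF vector-subspace axioms for $A$ and $B$ give $r_{A}(\alpha a)\geq r_{A}(a)$ and $r_{B}(\alpha b)\geq r_{B}(b)$, along with the analogues for $\omega$ and for $\rho$. A single supremum (respectively infimum) over decompositions of $x$ then produces $\lambda_{A+B}(\alpha x)\geq\lambda_{A+B}(x)$ and $\rho_{A+B}(\alpha x)\leq\rho_{A+B}(x)$.

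The main obstacle I anticipate is not estimation but bookkeeping: the separate suprema on moduli and on arguments in Definition~\ref{def-1} must be reconciled with the componentwise order on $\mathbb{C}$. This is precisely where the hypothesis that $A$ is homogeneous with $B$ is used; it forces, for each decomposition $x=a+b$, the pairs $(r_{A}(a),\omega_{A}(a))$ and $(r_{B}(b),\omega_{B}(b))$ to be order-consistent, so that the minimum of the moduli and the minimum of the arguments are attained on the \emph{same} summand and the separately-taken suprema genuinely give a polar representation of a well-defined element of the unit disk. Invoking this homogeneity carefully at each step to justify the componentwise assembly is the only delicate part of the argument.
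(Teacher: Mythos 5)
Your proposal is correct and follows essentially the same route as the paper's proof: both verify the two CIF vector-subspace axioms by splitting into moduli and arguments, using the sup--min (resp. inf--max) interchange over joint decompositions for the additive condition and the observation that $\alpha x=\alpha a+\alpha b$ ranges over a subfamily of decompositions of $\alpha x$ for the scalar condition. Your explicit remark on where homogeneity is needed to reassemble the componentwise estimates is a point the paper leaves implicit, but it does not change the argument.
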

\begin{proof}
One can easily show that $A+B$ is homogenous. Now, for any $x,y\in V$, we have
\begin{eqnarray*}
\lambda_{A+B}(x)\wedge\lambda_{A+B}(y)&=&r_{A+B}(x)e^{i2\pi\omega_{A+B}(x)}\wedge r_{A+B}(y)e^{i2\pi\omega_{A+B}(y)}\\
&=&(r_{A+B}(x)\wedge r_{A+B}(y))e^{i2\pi(\omega_{A+B}(x)\wedge\omega_{A+B}(x))},
\end{eqnarray*}
and
\begin{eqnarray*}
r_{A+B}(x)\wedge r_{A+B}(y)&=&\sup_{x=a+b}\{r_A(a)\wedge r_B(b)\}\wedge\sup_{y=c+d}\{r_A(c)\wedge r_B(d)\}\\
&=&\sup_{x=a+b,y=c+d}\{(r_A(a)\wedge r_B(b))\wedge(r_A(c)\wedge r_B(d))\}\\
&=&\sup_{x=a+b,y=c+d}\{(r_A(a)\wedge r_A(c))\wedge(r_B(b)\wedge r_B(d))\}\\
&\leq&\sup_{x=a+b,y=c+d}\{r_A(a+c)\wedge r_B(b+d)\}\\
&=&r_{A+B}(x+y).
\end{eqnarray*}
Similarly, $\omega_{A+B}(x)\wedge\omega_{A+B}(y)\leq\omega_{A+B}(x+y)$. Thus,
\begin{eqnarray*}
\lambda_{A+B}(x)\wedge\lambda_{A+B}(y)&=&(r_{A+B}(x)\wedge r_{A+B}(y))e^{i2\pi(\omega_{A+B}(x)\wedge\omega_{A+B}(x))}\\
&\geq&r_{A+B}(x+y)e^{i2\pi\omega_{A+B}(x+y)}=\lambda_{A+B}(x+y),
\end{eqnarray*}
and
\begin{eqnarray*}
\rho_{A+B}(x)\vee\rho_{A+B}(y)&=&\hat{r}_{A+B}(x)e^{i2\pi\hat{\omega}_{A+B}(x)}\vee \hat{r}_{A+B}(y)e^{i2\pi\hat{\omega}_{A+B}(y)}\\
&=&(\hat{r}_{A+B}(x)\vee\hat{r}_{A+B}(y))e^{i2\pi(\hat{\omega}_{A+B}(x)\vee\hat{\omega}_{A+B}(x))},
\end{eqnarray*}
and
\begin{eqnarray*}
\hat{r}_{A+B}(x)\vee\hat{r}_{A+B}(y)&=&\inf_{x=a+b}\{\hat{r}_A(a)\vee \hat{r}_B(b)\}\vee\inf_{y=c+d}\{\hat{r}_A(c)\vee\hat{r}_B(d)\}\\
&=&\inf_{x=a+b,y=c+d}\{(\hat{r}_A(a)\vee\hat{r}_B(b))\vee(\hat{r}_A(c)\vee\hat{r}_B(d))\}\\
&=&\inf_{x=a+b,y=c+d}\{(\hat{r}_A(a)\vee\hat{r}_A(c))\vee(\hat{r}_B(b)\vee\hat{r}_B(d))\}\\
&\geq&\inf_{x=a+b,y=c+d}\{\hat{r}_A(a+c)\vee\hat{r}_B(b+d)\}\\
&=&\hat{r}_{A+B}(x+y).
\end{eqnarray*}
Similarly, $\hat{\omega}_{A+B}(x)\vee\hat{\omega}_{A+B}(y)\geq\hat{\omega}_{A+B}(x+y)$. Thus,
\begin{eqnarray*}
\rho_{A+B}(x)\vee\rho_{A+B}(y)&=&(\hat{r}_{A+B}(x)\vee \hat{r}_{A+B}(y))e^{i2\pi(\hat{\omega}_{A+B}(x)\vee\hat{\omega}_{A+B}(x))}\\
&\geq&\hat{r}_{A+B}(x+y)e^{i2\pi\hat{\omega}_{A+B}(x+y)}=\rho_{A+B}(x+y).
\end{eqnarray*}
Also, for $\alpha\in K$, we have
$\lambda_{A+B}(x)=r_{A+B}(x)e^{i2\pi\omega_{A+B}(x)}$, and
\begin{eqnarray*}
r_{A+B}(x)&=&\sup_{x=a+b}\{r_A(a)\wedge r_B(b)\}\\
&\leq&\sup_{\alpha x=\alpha a+\alpha b}\{r_A(\alpha a)\wedge r_B(\alpha b)\}\\
&\leq&\sup_{\alpha x=c+d}\{r_A(c)\wedge r_B(d)\}\\
&=&r_{A+B}(\alpha x).
\end{eqnarray*}
Similarly, $\omega_{A+B}(x)\leq\omega_{A+B}(\alpha x)$. Thus,
$$\lambda_{A+B}(x)=r_{A+B}(x)e^{i2\pi\omega_{A+B}(x)}\leq r_{A+B}(\alpha x)e^{i2\pi\omega_{A+B}(\alpha x)}=\lambda_{A+B}(\alpha x),$$ and
$\rho_{A+B}(x)=\hat{r}_{A+B}(x)e^{i2\pi\hat{\omega}_{A+B}(x)}$, but
\begin{eqnarray*}
\hat{r}_{A+B}(x)&=&\inf_{x=a+b}\{\hat{r}_A(a)\vee\hat{r}_B(b)\}\\
&\geq&\inf_{\alpha x=\alpha a+\alpha b}\{\hat{r}_A(\alpha a)\vee\hat{r}_B(\alpha b)\}\\
&\geq&\inf_{\alpha x=c+d}\{\hat{r}_A(c)\vee\hat{r}_B(d)\}\\
&=&\hat{r}_{A+B}(\alpha x).
\end{eqnarray*}
Similarly, $\hat{\omega}_{A+B}(x)\geq\hat{\omega}_{A+B}(\alpha x)$. Thus,
$$\rho_{A+B}(x)=\hat{r}_{A+B}(x)e^{i2\pi\hat{\omega}_{A+B}(x)}\geq\hat{r}_{A+B}(\alpha x)e^{i2\pi\hat{\omega}_{A+B}(\alpha x)}=\rho_{A+B}(\alpha x).$$
Hence, $A+B=(\lambda_{A+B},\rho_{A+B})$ is a CIF vector subspaces of a vector subspace $V$.
\end{proof}
\begin{defn}
Let $A=(\lambda_A, \rho_A)$ be a CIF vector subspace of a $K$-vector space $V$. For $\alpha\in K$ and $x\in V$, define $\alpha A=(\lambda_{\alpha A}, \rho_{\alpha A})$, where $$\lambda_{\alpha A}(x)=\begin{cases} \lambda_A(\alpha^{-1}x)=r_A(\alpha^{-1}x)e^{i2\pi\omega_A(\alpha^{-1}x)} & :\quad \alpha\not=0\\
1 & :\quad \alpha=0, x=0\\
0 & :\quad \alpha=0, x\not=0\end{cases}$$ and
$$\rho_{\alpha A}(x)=\begin{cases} \rho_A(\alpha^{-1}x)=\hat{r}_A(\alpha^{-1}x)e^{i2\pi\hat{\omega}_A(\alpha^{-1}x)} & :\quad \alpha\not=0\\
0 & :\quad \alpha=0, x=0\\
1 & :\quad \alpha=0, x\not=0\end{cases}$$
\end{defn}
\begin{defn}
Let $V$, $V'$ be $K$-vector spaces and let $f : V\rightarrow V'$ be any map. If $A=(\lambda_A, \rho_A)$, $B=(\lambda_B, \rho_B)$ are CIF vector subspaces of $V$ and $V'$, respectively, then the preimage of $B$ under $f$ is defined to be a CIF set $f^{-1}(B)=(\lambda_{f^{-1}}, \rho_{f^{-1}})$, where $\lambda_{f^{-1}}(x)=\lambda_B(f(x))$ and $\rho_{f^{-1}}(x)=\rho_B(f(x))$ for any $x\in V$ and the image of $A=(\lambda_A, \rho_A)$ under $f$ is defined to be the CIF set $f(A)=(\lambda_{f(A)}, \rho_{f(A)})$ where $$\lambda_{f(A)}(y)=\begin{cases} \underset{x\in f^{-1}(y)}{\sup}\{\lambda_A(x)\}=\underset{x\in f^{-1}(y)}{\sup}\{r_A(x)e^{i2\pi\omega_A(x)}\} & :\quad y\in f(V)\\
0 & :\quad y\not\in f(V)\end{cases}$$ and
$$\rho_{f(A)}(y)=\begin{cases} \underset{x\in f^{-1}(y)}{\inf}\{\rho_A(x)\}=\underset{x\in f^{-1}(y)}{\inf}\{\hat{r}_A(x)e^{i2\pi\hat{\omega}_A(x)}\} & :\quad y\in f(V)\\
1 & :\quad y\not\in f(V)\end{cases}$$
\end{defn}
The following results are easy to get. Here we omit the proofs.
\begin{lemma}\textnormal{ Let $A=(\lambda_A, \rho_A)$, where $\lambda_A=r_Ae^{i2\pi\omega_A}$ and $\rho_A=\hat{r}_Ae^{i2\pi\hat{\omega}_A}$, be a CIF vector subspace of $V$. Then $\alpha A=(\lambda_{\alpha A}, \rho_{\alpha A})$ is also a CIF vector subspace of $V$.}
\end{lemma}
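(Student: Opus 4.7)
The plan is to verify the two defining conditions of a CIF vector subspace, namely $\lambda_{\alpha A}(x+y)\geq \lambda_{\alpha A}(x)\wedge \lambda_{\alpha A}(y)$, $\rho_{\alpha A}(x+y)\leq \rho_{\alpha A}(x)\vee \rho_{\alpha A}(y)$, and $\lambda_{\alpha A}(\beta x)\geq \lambda_{\alpha A}(x)$, $\rho_{\alpha A}(\beta x)\leq \rho_{\alpha A}(x)$ for every $\beta\in K$. The natural dichotomy is to split on whether $\alpha=0$ or $\alpha\neq 0$, since the definition of $\alpha A$ branches on exactly this distinction.

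In the case $\alpha\neq 0$, I would use the identity $\alpha^{-1}(x+y)=\alpha^{-1}x+\alpha^{-1}y$ together with the hypothesis that $A$ is a CIF vector subspace. This gives
$$\lambda_{\alpha A}(x+y)=\lambda_{A}(\alpha^{-1}x+\alpha^{-1}y)\geq \lambda_{A}(\alpha^{-1}x)\wedge \lambda_{A}(\alpha^{-1}y)=\lambda_{\alpha A}(x)\wedge \lambda_{\alpha A}(y),$$
and the analogous chain for $\rho_{\alpha A}$ with $\vee$ replacing $\wedge$ and the inequality reversed. For the scalar condition, $\alpha^{-1}(\beta x)=\beta(\alpha^{-1}x)$ reduces the required inequality to condition (2) of the CIF vector subspace definition applied to $A$ at the point $\alpha^{-1}x$. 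Throughout, the ordering on $re^{i2\pi\omega}$ is componentwise in $r$ and $\omega$, so once the inequality is established for $\lambda_A$ as a whole, no separate phase argument is needed.

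In the case $\alpha=0$, the functions $\lambda_{\alpha A}$ and $\rho_{\alpha A}$ become indicator-like: they take the extreme values $1$ or $0$ according to whether their argument is the zero vector. Here I would argue by cases on the nullity of the relevant vectors. If $x+y=0$, the additive inequality for $\lambda_{\alpha A}$ becomes $1\geq \lambda_{\alpha A}(x)\wedge \lambda_{\alpha A}(y)$, which is automatic; if $x+y\neq 0$, then at least one of $x,y$ is nonzero, so the right side is $0$ and the inequality $0\geq 0$ holds. The same style of argument handles $\rho_{\alpha A}$, and the scalar condition is immediate because $\beta x=0$ whenever $x=0$, while if $x\neq 0$ both sides are $0$ (or, when $\beta=0$, the left side jumps to $1$, which strengthens the inequality).

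The proof is essentially a bookkeeping exercise, and the main obstacle is simply to check all combinations of $\alpha=0$ vs.\ $\alpha\neq 0$ and zero vs.\ nonzero inputs without overlooking a degenerate subcase. The slight subtlety to keep in mind is that in the $\alpha=0$ case one cannot invoke the hypothesis on $A$ directly and must verify the inequalities by hand from the piecewise definition, but this is unavoidable and completely mechanical.
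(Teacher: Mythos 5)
Your proof is correct: the case split on $\alpha=0$ versus $\alpha\neq 0$, with the nonzero case reduced to the CIF vector subspace conditions for $A$ at $\alpha^{-1}x$ and the zero case checked directly from the indicator-like definition, is exactly the routine verification intended here. The paper explicitly omits this proof as "easy to get," so your argument simply supplies the standard details; nothing is missing.
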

\begin{lemma}\textnormal{ Let $A=(\lambda_A, \rho_A)$, where $\lambda_A=r_Ae^{i2\pi\omega_A}$ and $\rho_A=\hat{r}_Ae^{i2\pi\hat{\omega}_A}$, be a CIF vector subspace of $V'$ and $f : V\rightarrow V'$ be any map. Then $f^{-1}(A)=(\lambda_{f^{-1}(A)}, \rho_{f^{-1}(A)})$ is also a CIF vector subspace of $V$, where $\lambda_{f^{-1}(A)}(x)=\lambda_A(f(x))$ and $\rho_{f^{-1}(A)}(x)=\rho_A(f(x))$ for all $x\in V$.}
\end{lemma}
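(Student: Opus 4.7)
The plan is to verify directly that $f^{-1}(A) = (\lambda_{f^{-1}(A)}, \rho_{f^{-1}(A)})$ satisfies the two defining conditions of a CIF vector subspace of $V$, using only the definition of the preimage (pointwise pullback along $f$) together with the fact that $f$ respects the vector-space operations. Although the statement says ``any map'', the conclusion clearly needs $f$ to be $K$-linear (i.e.\ $f(x+y)=f(x)+f(y)$ and $f(\alpha x)=\alpha f(x)$); I would either make this hypothesis explicit or note that the result is vacuous without it. Granted this, the proof reduces entirely to pushing the two CIF-conditions for $A$ through $f$.

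The first step is to handle additivity. For $x,y\in V$, unpack the definition to write $\lambda_{f^{-1}(A)}(x+y)=\lambda_A(f(x+y))=\lambda_A(f(x)+f(y))$, then apply condition (1) in the definition of a CIF vector subspace to $A$ at the points $f(x),f(y)\in V'$ to get
\[
\lambda_A(f(x)+f(y))\geq \lambda_A(f(x))\wedge\lambda_A(f(y))=\lambda_{f^{-1}(A)}(x)\wedge\lambda_{f^{-1}(A)}(y).
\]
The same argument run with the reversed inequality and $\vee$ in place of $\wedge$ gives $\rho_{f^{-1}(A)}(x+y)\leq \rho_{f^{-1}(A)}(x)\vee\rho_{f^{-1}(A)}(y)$. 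Because the ordering on complex numbers of the form $re^{i2\pi\omega}$ is defined componentwise on the modulus and phase parts, these inequalities automatically descend to the $r$- and $\omega$-components; no separate argument on $r_{f^{-1}(A)},\omega_{f^{-1}(A)},\hat r_{f^{-1}(A)},\hat\omega_{f^{-1}(A)}$ is needed.

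The second step, homogeneity in scalars, is even shorter: for $\alpha\in K$ and $x\in V$,
\[
\lambda_{f^{-1}(A)}(\alpha x)=\lambda_A(f(\alpha x))=\lambda_A(\alpha f(x))\geq \lambda_A(f(x))=\lambda_{f^{-1}(A)}(x),
\]
using condition (2) for $A$ at $f(x)$; symmetrically $\rho_{f^{-1}(A)}(\alpha x)\leq \rho_{f^{-1}(A)}(x)$. Combining the two steps yields both clauses of the definition of a CIF vector subspace for $f^{-1}(A)$, completing the proof.

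There is essentially no obstacle here, as the argument is a routine pullback verification; the only subtlety worth flagging is the implicit linearity of $f$, which must hold for the key rewrites $f(x+y)=f(x)+f(y)$ and $f(\alpha x)=\alpha f(x)$ to be available. Unlike Lemma~\ref{mylemma-1}, no homogeneity or $\sup$/$\inf$ manipulation is required, since the pullback definition transports the inequalities from $V'$ to $V$ pointwise without reshuffling the underlying $r$- and $\omega$-components.
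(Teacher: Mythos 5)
Your argument is correct and is precisely the routine pointwise pullback the paper has in mind; the paper explicitly omits the proof of this lemma as "easy to get," so there is no written proof to diverge from. Your flag about linearity is well taken: the statement says "any map," but the rewrites $f(x+y)=f(x)+f(y)$ and $f(\alpha x)=\alpha f(x)$ are indispensable, so $f$ must be $K$-linear (as it implicitly is throughout the paper) for the conclusion to hold.
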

\begin{lemma}\textnormal{ Let $f : V\rightarrow V'$ be any map. If $A=(\lambda_A, \rho_A)$, where $\lambda_A=r_Ae^{i2\pi\omega_A}$ and $\rho_A=\hat{r}_Ae^{i2\pi\hat{\omega}_A}$, is a CIF vector subspace of $V$, then $f(A)=(\lambda_{f(A)}, \rho_{f(A)})$ is a CIF vector subspace of $V'$.}
\end{lemma}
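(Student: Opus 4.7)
The plan is to verify the two defining inequalities of a CIF vector subspace for the image $f(A) = (\lambda_{f(A)}, \rho_{f(A)})$. Concretely, for all $y_1, y_2 \in V'$ and $\alpha \in K$ one must show
$$\lambda_{f(A)}(y_1+y_2) \geq \lambda_{f(A)}(y_1) \wedge \lambda_{f(A)}(y_2),\quad \rho_{f(A)}(y_1+y_2) \leq \rho_{f(A)}(y_1) \vee \rho_{f(A)}(y_2),$$
$$\lambda_{f(A)}(\alpha y) \geq \lambda_{f(A)}(y),\quad \rho_{f(A)}(\alpha y) \leq \rho_{f(A)}(y).$$
I would read the hypothesis ``any map'' as ``any $K$-linear map'' (the same tacit assumption is already required for the previous lemma on $f^{-1}(A)$); without linearity, even the baseline identity $\lambda_{f(A)}(0)=1$ breaks, since it relies on $f(0)=0$.

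For the additivity inequality on $\lambda_{f(A)}$, I would split on whether $y_1, y_2 \in f(V)$. If at least one lies outside $f(V)$, the right-hand side is $0$ and the inequality holds by inspection. Otherwise, linearity of $f$ gives the crucial inclusion $f^{-1}(y_1) + f^{-1}(y_2) \subseteq f^{-1}(y_1+y_2)$, so for any $x_1 \in f^{-1}(y_1)$, $x_2 \in f^{-1}(y_2)$ one has $r_A(x_1+x_2) \leq r_{f(A)}(y_1+y_2)$. Since $A$ is a CIF vector subspace, $r_A(x_1+x_2) \geq r_A(x_1) \wedge r_A(x_2)$, and taking supremum over $x_1, x_2$ independently, together with the elementary identity $\sup_{x_1,x_2}[h_1(x_1)\wedge h_2(x_2)] = (\sup h_1) \wedge (\sup h_2)$ for $[0,1]$-valued $h_1, h_2$, yields $r_{f(A)}(y_1+y_2) \geq r_{f(A)}(y_1) \wedge r_{f(A)}(y_2)$. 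Repeating the same chain on the phase $\omega$ and reassembling via the componentwise complex order $\leq$ defined in Section~2 gives the conclusion for $\lambda$; the dual argument with $\inf$ and $\vee$, using $\inf_{x_1,x_2}[h_1(x_1) \vee h_2(x_2)] = (\inf h_1) \vee (\inf h_2)$, settles $\rho$.

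For scalar multiplication I would treat the boundary cases first: if $\alpha=0$ then $\alpha y = 0$, and $\lambda_{f(A)}(0)=1$ (because $0 \in f^{-1}(0)$ and $\lambda_A(0)=1$), $\rho_{f(A)}(0)=0$, so both inequalities are trivial; if $y \notin f(V)$ the LHS values $\lambda_{f(A)}(y)=0$ and $\rho_{f(A)}(y)=1$ make the inequalities vacuous. In the generic case $\alpha \neq 0$ and $y \in f(V)$, scalar multiplication by $\alpha$ is a bijection $f^{-1}(y) \to f^{-1}(\alpha y)$, so combining with $\lambda_A(\alpha x) \geq \lambda_A(x)$ and $\rho_A(\alpha x) \leq \rho_A(x)$ and taking sup, respectively inf, delivers the desired inequalities. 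The main obstacle is not technical depth but careful bookkeeping: the piecewise definition of $f(A)$ forces several case splits on whether $y, y_1, y_2, \alpha y$ belong to $f(V)$, and the complex representation $re^{i2\pi\omega}$ must be verified for the modulus and phase separately before being recombined under the componentwise order.
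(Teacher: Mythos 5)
The paper does not actually prove this lemma (it appears in the block prefaced by ``The following results are easy to get. Here we omit the proofs.''), so there is no in-paper argument to compare against; your proof is correct and is exactly the standard argument the omission presupposes, namely the inclusion $f^{-1}(y_1)+f^{-1}(y_2)\subseteq f^{-1}(y_1+y_2)$, the bijection $f^{-1}(y)\to f^{-1}(\alpha y)$ for $\alpha\neq 0$, and the interchange of $\sup$ with $\wedge$ (resp.\ $\inf$ with $\vee$), applied separately to modulus and phase. Your remark that ``any map'' must tacitly mean ``any $K$-linear map'' is also correct and worth recording, since without linearity the boundary cases (e.g.\ $y_1,y_2\in f(V)$ but $y_1+y_2\notin f(V)$) would break the statement.
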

\begin{lemma}\label{mylemma-2}\textnormal{ Let $A=(\lambda_A, \rho_A)$ and $B=(\lambda_B, \rho_B)$ be CIF vector subspace of $V$ such that $A$ is homogenous with $B$, where $\lambda_A=r_Ae^{i2\pi\omega_A}$, $\rho_A=\hat{r}_Ae^{i2\pi\hat{\omega}_A}$ and $\lambda_B=r_Be^{i2\pi\omega_B}$, $\rho_B=\hat{r}_Be^{i2\pi\hat{\omega}_B}$. Then $A\cap B=(\lambda_{A\cap B}, \rho_{A\cap B})$ is a CIF vector subspace of $V$, where}
\begin{eqnarray}
\lambda_{A\cap B}(x)&=&\lambda_A(x)\wedge\lambda_B(x)\nonumber\\
&=&(r_A(x)\wedge r_B(x))e^{i2\pi(\omega_A(x)\wedge\omega_B(x))}\nonumber
\end{eqnarray}
 and
 \begin{eqnarray}
 \rho_{A\cap B}(x)&=&\rho_A(x)\vee\rho_B(x)\nonumber\\
 &=&(\hat{r}_A(x)\vee\hat{r}_B(x))e^{i2\pi(\hat{\omega}_A(x)\vee\hat{\omega}_B(x))}\nonumber
\end{eqnarray}
\end{lemma}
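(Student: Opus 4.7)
The plan is to verify directly the two defining conditions of a CIF vector subspace for $A\cap B$, by reducing each to the corresponding property already assumed for $A$ and for $B$ and then combining via elementary manipulations of $\wedge$ and $\vee$. This is structurally simpler than Lemma \ref{mylemma-1}, since no supremum over decompositions $x=a+b$ is involved: the values of $\lambda_{A\cap B}$ and $\rho_{A\cap B}$ at a single point are pointwise mins/maxes of the corresponding values for $A$ and $B$.

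First I would justify that the right-hand sides of the two displays in the statement are in fact well-defined complex numbers of the form $\alpha e^{i2\pi\beta}$. Setting $y=x$ in the homogeneity hypothesis gives $r_A(x)\leq r_B(x)$ iff $\omega_A(x)\leq\omega_B(x)$, and $\hat{r}_A(x)\leq\hat{r}_B(x)$ iff $\hat{\omega}_A(x)\leq\hat{\omega}_B(x)$. Hence $\lambda_A(x)$ and $\lambda_B(x)$ are always comparable in the partial order introduced in Section~2, so $\lambda_A(x)\wedge\lambda_B(x)$ equals the one with the smaller modulus and the smaller argument, namely $(r_A(x)\wedge r_B(x))e^{i2\pi(\omega_A(x)\wedge\omega_B(x))}$; likewise $\rho_A(x)\vee\rho_B(x)=(\hat{r}_A(x)\vee\hat{r}_B(x))e^{i2\pi(\hat{\omega}_A(x)\vee\hat{\omega}_B(x))}$.

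Then, for additive closure, using associativity and commutativity of $\wedge$ and the fact that $A$ and $B$ are themselves CIF vector subspaces, for any $x,y\in V$ one computes
\begin{eqnarray*}
\lambda_{A\cap B}(x)\wedge\lambda_{A\cap B}(y) &=& \bigl(\lambda_A(x)\wedge\lambda_B(x)\bigr)\wedge\bigl(\lambda_A(y)\wedge\lambda_B(y)\bigr) \\
&=& \bigl(\lambda_A(x)\wedge\lambda_A(y)\bigr)\wedge\bigl(\lambda_B(x)\wedge\lambda_B(y)\bigr) \\
&\leq& \lambda_A(x+y)\wedge\lambda_B(x+y) \;=\; \lambda_{A\cap B}(x+y),
\end{eqnarray*}
and the symmetric computation with $\vee$ in place of $\wedge$ and reversed inequalities yields $\rho_{A\cap B}(x)\vee\rho_{A\cap B}(y)\geq\rho_{A\cap B}(x+y)$. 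For scalar closure by $\alpha\in K$, $\lambda_{A\cap B}(\alpha x)=\lambda_A(\alpha x)\wedge\lambda_B(\alpha x)\geq\lambda_A(x)\wedge\lambda_B(x)=\lambda_{A\cap B}(x)$, and analogously $\rho_{A\cap B}(\alpha x)\leq\rho_{A\cap B}(x)$. The only mild subtlety is the comparability check in the previous paragraph; once that is in place the verification reduces to standard lattice identities applied coordinatewise in the pair $(\textrm{modulus},\textrm{argument})$, so no serious obstacle is anticipated.
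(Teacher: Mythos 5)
Your proof is correct; the paper itself omits the proof of this lemma (it is listed among the results for which ``we omit the proofs''), and your direct pointwise verification --- reducing each closure condition for $A\cap B$ to the corresponding conditions for $A$ and $B$ via monotonicity of $\wedge$ and $\vee$ --- is exactly the routine argument the author had in mind. Your preliminary observation that homogeneity (with $y=x$) makes $\lambda_A(x)$ and $\lambda_B(x)$ comparable, so that the minimum is genuinely $(r_A(x)\wedge r_B(x))e^{i2\pi(\omega_A(x)\wedge\omega_B(x))}$, is a worthwhile extra care that the paper glosses over.
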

\begin{defn}
A $\mathbb{Z}_2$-graded vector space $V=V_0+V_1$ possessing the operation called the bilinear bracket product, $$[\ ,\ ] : V\times V\stackrel{bilinear}{\longrightarrow}[x,y]\in V$$ is called a Lie superalgebra, if it satisfies the following conditions\\
(1) $[V_i,V_j]\subseteq V_{i+j}$\\
(2) $[x,y]=-(-1)^{|x||y|}[y,x]$ $\forall x,y\in V_0\cup V_1$\\
(3) $[x,[y,z]]-(-1)^{|x||y|}[y,[x,z]]=[[x,y],z]$
\end{defn}
\section{Complex intuitionistic fuzzy lie sub-superalgebras and ideals}
In this section we assume that $V$ is a Lie superalgebra over a field $K$.
\begin{defn}\label{def-2}
Let $V=V_0+V_1$ be a $\mathbb{Z}_2$-graded vector space. Suppose that $A_0=(\lambda_{A_0}, \rho_{A_0})$ and $A_1=(\lambda_{A_1}, \rho_{A_1})$ are CIF vector subspaces of $V_0$ and $V_1$, respectively. Define $\mathfrak{a}_0=(\lambda_{\mathfrak{a}_0}, \rho_{\mathfrak{a}_0})$
where
$$\lambda_{\mathfrak{a}_0}(x)=\begin{cases} \lambda_{A_0}(x) & :\quad x\in V_0\\
0 & :\quad x\not\in V_0\end{cases}\ \ and\ \
\rho_{\mathfrak{a}_0}(x)=\begin{cases} \rho_{A_0}(x) & :\quad x\in V_0\\
1 & :\quad x\not\in V_0\end{cases}$$ and define $\mathfrak{a}_1=(\lambda_{\mathfrak{a}_1}, \rho_{\mathfrak{a}_1})$
where
$$\lambda_{\mathfrak{a}_1}(x)=\begin{cases} \lambda_{A_1}(x) & :\quad x\in V_1\\
0 & :\quad x\not\in V_1\end{cases}\ \ and\ \
\rho_{\mathfrak{a}_1}(x)=\begin{cases} \rho_{A_1}(x) & :\quad x\in V_1\\
1 & :\quad x\not\in V_1\end{cases}$$
\end{defn}
Then $\mathfrak{a}_0=(\lambda_{\mathfrak{a}_0}, \rho_{\mathfrak{a}_0})$ and $\mathfrak{a}_1=(\lambda_{\mathfrak{a}_1}, \rho_{\mathfrak{a}_1})$ are the CIF vector subspaces of $V$. Moreover, we have $\mathfrak{a}_0\cap\mathfrak{a}_1=(\lambda_{\mathfrak{a}_0\cap\mathfrak{a}_1}, \rho_{\mathfrak{a}_0\cap\mathfrak{a}_1})$, where $$\lambda_{\mathfrak{a}_0\cap \mathfrak{a}_1}(x)=\lambda_{\mathfrak{a}_0}(x)\wedge\lambda_{\mathfrak{a}_1}(x)=\begin{cases} 1 & :\quad x=0\\
0 & :\quad x\not=0,\end{cases}$$ and $$\rho_{\mathfrak{a}_0\cap \mathfrak{a}_1}(x)=\rho_{\mathfrak{a}_0}(x)\vee\rho_{\mathfrak{a}_1}(x)=\begin{cases} 0 & :\quad x=0\\ 1 & :\quad x\not=0.\end{cases}$$ So $\mathfrak{a}_0+\mathfrak{a}_1$ is the direct sum and is denoted by $A_0\oplus A_1$. If $A=(\lambda_A, \rho_A)$ is a CIF vector subspace of $V$ and $A=A_0\oplus A_1$, then $A=(\lambda_A, \rho_A)$ is called a $\mathbb{Z}_2$-graded CIF vector subspaces of $V$.
\begin{rmk}\textnormal{
(1)}\begin{align*}
\lambda_A(x)&=\lambda_{A_0\oplus A_1}(x)\\
&=\sup_{x=\alpha+\beta}\{\lambda_{\mathfrak{a}_0}(\alpha)\wedge\lambda_{\mathfrak{a}_1}(\beta)\}\\
&=\sup_{x=\alpha+\beta}\{\lambda_{\mathfrak{a}_0}(\alpha_0+\alpha_1)\wedge\lambda_{\mathfrak{a}_1}(\beta_0+\beta_1)\},\ where\ \alpha=\alpha_0+\alpha_1,\beta=\beta_0+\beta_1 \\
&=\sup_{x=\alpha+\beta}\{\lambda_{\mathfrak{a}_0}(\alpha_0)\wedge\lambda_{\mathfrak{a}_1}(\beta_1)\}\\
&=\sup_{x=\alpha+\beta}\{r_{\mathfrak{a}_0}(\alpha_0)e^{i2\pi\omega_{\mathfrak{a}_0}(\alpha_0)}\wedge r_{\mathfrak{a}_1}(\beta_1)e^{i2\pi\omega_{\mathfrak{a}_1}(\beta_1)}\}\\
&=r_{\mathfrak{a}_0}(x_0)e^{i2\pi\omega_{\mathfrak{a}_0}(x_0)}\wedge r_{\mathfrak{a}_1}(x_1)e^{i2\pi\omega_{\mathfrak{a}_1}(x_1)}\\
&=r_{A_0}(x_0)e^{i2\pi\omega_{A_0}(x_0)}\wedge r_{A_1}(x_1)e^{i2\pi\omega_{A_1}(x_1)}\\
&=\lambda_{A_0}(x_0)\wedge\lambda_{A_1}(x_1).
\end{align*}
\textnormal{
(2) $A_0=(\lambda_{A_0}, \rho_{A_0})$ and $A_1=(\lambda_{A_1}, \rho_{A_1})$ are the even and odd parts of $A=(\lambda_A,\rho_A)$ (respectively)\\
(3) $\mathfrak{a}_0=(\lambda_{\mathfrak{a}_0}, \rho_{\mathfrak{a}_0})$ and $\mathfrak{a}_1=(\lambda_{\mathfrak{a}_1}, \rho_{\mathfrak{a}_1})$ are the extensions of $A_0=(\lambda_{A_0},\rho_{A_0})$ and $A_1=(\lambda_{A_1},\rho_{A_1})$.} (respectively)
\end{rmk}
\begin{defn}
Let $A=(\lambda_A,\rho_A)$  be CIF set of $V$. Then $A=(\lambda_A,\rho_A)$ is called a complex fuzzy lie sub-superalgebra of $V$, if it satisfies the following conditions:\\
(1) $A=(\lambda_A,\rho_A)$ is a $\mathbb{Z}_2$-graded CIF vector space\\
(2) $\lambda_A([x,y])\geq\lambda_A(x)\wedge \lambda_A(y)$ and
$\rho_A([x,y])\leq\rho_A(x)\vee\rho_A(y)$. If the condition(2) is replaced by (3) $\lambda_A([x,y])\geq\lambda_A(x)\vee \lambda_A(y)$ and $\rho_A([x,y])\leq\rho_A(x)\wedge\rho_A(y)$, then $A=(\lambda_A,\rho_A)$ is called a CIF ideal of $V$.
\end{defn}
\begin{Example}\textnormal{
Let $N=N_0\oplus N_1$, where $N_0=<e>$, $N_1=<a_1,...,a_n,b_1,...,b_n>$, and $[a_i,b_i]=e$, $i=1,...,n$, the remaining brackets being zero. Then, $N$ is Lie superalgebra by \cite[page 11]{V}.
Define $A_0=(\lambda_{A_0},\rho_{A_0})$, where $$\lambda_{A_0}:N_0\rightarrow\mathbb{C}\ \ by\ \  \lambda_{A_0}(x)=\begin{cases} 0.7e^{i(1.4)\pi} & :\quad 0\not=x\in N_0 \\ 1 & :\quad x=0\end{cases},$$ $$\rho_{A_0}:N_0\rightarrow\mathbb{C}\ \ by\ \ \rho_{A_0}(x)=\begin{cases} 0.2e^{i(0.4)\pi} & :\quad 0\not=x\in N_0 \\ 0 & :\quad x=0\end{cases}.$$ Also, define $A_1=(\lambda_{A_1},\rho_{A_1})$, where $$\lambda_{A_1}:N_1\rightarrow\mathbb{C}\ \ by\ \  \lambda_{A_1}(x)=\begin{cases} 0.5e^{i\pi} & :\quad 0\not=x\in N_1 \\ 1 & :\quad x=0\end{cases},$$ $$\rho_{A_1}:N_1\rightarrow\mathbb{C}\ \ by\ \ \rho_{A_1}(x)=\begin{cases} 0.4e^{i(0.8)\pi} & :\quad 0\not=x\in N_1 \\ 0 & :\quad x=0\end{cases}.$$ Define $A=(\lambda_A,\rho_A)$. Then, by Definition~\ref{def-2}, $A=A_0\oplus A_1$, and so $A=(\lambda_A,\rho_A)$ is a $\mathbb{Z}_2$- graded CIF vector subspace of $N$. Moreover, it is easy to check that $A=(\lambda_A,\rho_A)$ is a CIF ideal of $N$.}
\end{Example}
For any complex fuzzy set $\lambda =re^{i2\pi\omega}$ of $V$, we define the image of $\lambda$ by
$${\rm Im}(\lambda)=\{(t,s)\in[0,1]\times[0,1]\ :\ \lambda(x)=r(x)e^{i2\pi\omega(x)}=te^{i2\pi s}\ for\ some\ x\in V \}$$
\begin{defn}
For any $t,s\in [0, 1]$ and complex fuzzy subset $\lambda =re^{i2\pi\omega}$ of $V$, the set
\\
$U(\lambda,(t,s))=\left\{ x \in V\ |\ r(x)\geqslant t\ and\ \omega(x)\geqslant s\right\}$ is called an upper $(t, s)$-level cut of $\lambda$,
\\$L(\lambda,(t,s))=\left\{ x \in V\ |\ r(x)\leqslant t\ and\ \omega(x)\leqslant s\right\}$ is called a lower $(t, s)$-level cut of $\lambda$
\end{defn}
Suppose that $\lambda_A(x)=r_A(x)e^{i2\pi\omega_A(x)}$ and $\rho_A(x)=\hat{r}_A(x)e^{i2\pi\hat{\omega}_A(x)}$, then we have the following result
\begin{Theorem}\textnormal{
If $A=(\lambda_A, \rho_A)$ is a CIF lie sub-superalgebra (respectively CIF lie ideal) of $V$, then the sets $U(\lambda_A,(t, s))$ and $L(\rho_A,(t, s))$ are lie sub-superalgebras (respectively ideals) of $V$ for every $(t,s)\in{\rm Im}(\lambda_A) \cap{\rm Im}(\rho_A).$}
\end{Theorem}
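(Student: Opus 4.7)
The plan is to verify, element by element, that the level sets inherit closure from the CIF inequalities defining $A$. Fix $(t,s)\in{\rm Im}(\lambda_A)\cap{\rm Im}(\rho_A)$; the role of this hypothesis is to guarantee that the two level sets are nonempty (in fact both always contain $0$, so the condition serves to pick out a nontrivial cut).

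First I would handle $U(\lambda_A,(t,s))$ in the sub-superalgebra case. Take $x,y\in U(\lambda_A,(t,s))$ and $\alpha\in K$. Unpacking the order on polar complex values, namely $ae^{i2\pi b}\leq ce^{i2\pi d}\iff a\leq c\text{ and }b\leq d$, the CIF inequality $\lambda_A(x+y)\geq\lambda_A(x)\wedge\lambda_A(y)$ delivers $r_A(x+y)\geq t$ and $\omega_A(x+y)\geq s$, so $x+y\in U(\lambda_A,(t,s))$. The same translation applied to $\lambda_A(\alpha x)\geq\lambda_A(x)$ and $\lambda_A([x,y])\geq\lambda_A(x)\wedge\lambda_A(y)$ yields closure under scalars and under the bracket.

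To recover the $\mathbb{Z}_2$-grading on $U(\lambda_A,(t,s))$ I would invoke the computation in the Remark, which gives $\lambda_A(x)=\lambda_{A_0}(x_0)\wedge\lambda_{A_1}(x_1)$ for $x=x_0+x_1$ with $x_i\in V_i$. Hence $r_A(x)\geq t$ and $\omega_A(x)\geq s$ force $r_{A_i}(x_i)\geq t$ and $\omega_{A_i}(x_i)\geq s$ for both $i$, and since $\lambda_A(x_i)=\lambda_{A_i}(x_i)$ (using $\lambda_{A_{1-i}}(0)=1$), we conclude $x_0,x_1\in U(\lambda_A,(t,s))$. Thus $U(\lambda_A,(t,s))$ splits as $\bigl(U(\lambda_A,(t,s))\cap V_0\bigr)\oplus\bigl(U(\lambda_A,(t,s))\cap V_1\bigr)$, as required of a sub-superalgebra.

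The argument for $L(\rho_A,(t,s))$ is the mirror image, reversing inequalities and swapping $\wedge$ with $\vee$, using $\rho_A(x+y)\leq\rho_A(x)\vee\rho_A(y)$, $\rho_A(\alpha x)\leq\rho_A(x)$, the bracket inequality on $\rho_A$, and the analogous grading identity $\rho_A(x)=\rho_{A_0}(x_0)\vee\rho_{A_1}(x_1)$. For the ideal case one simply strengthens the bracket step: from $\lambda_A([x,y])\geq\lambda_A(x)\vee\lambda_A(y)\geq te^{i2\pi s}$ for $x\in U(\lambda_A,(t,s))$ and arbitrary $y\in V$, absorption follows, and dually for $\rho_A$. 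The main obstacle I anticipate is not any single calculation but keeping the polar-form bookkeeping straight: each CIF inequality must be split into two real inequalities on modulus and phase before level-set closure can be read off, and in the grading step it is essential to invoke the Remark rather than attempt an ad-hoc splitting of $x$.
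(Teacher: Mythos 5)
Your proposal is correct and follows essentially the same route as the paper: split each polar inequality $ae^{i2\pi b}\leq ce^{i2\pi d}$ into its modulus and phase components to get closure of the level cuts under addition, scalars, and the bracket, and use the decomposition $\lambda_A(x)=\lambda_{A_0}(x_0)\wedge\lambda_{A_1}(x_1)$ (and its dual for $\rho_A$) to show the cuts are $\mathbb{Z}_2$-graded. The only cosmetic difference is in the grading step, where you read off $r_{A_i}(x_i)\geq t$ directly from the fact that a minimum is $\geq t$, while the paper reaches the same conclusion by a two-case comparison of $\lambda_{A_0}(x_0)$ and $\lambda_{A_1}(x_1)$.
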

\begin{proof}
Let $(t,s)\in{\rm Im}(\lambda_A)\cap{\rm Im}(\rho_A) \subseteq [0,1]\times[0,1]$ and let $x,y\in U(\lambda_A,(t,s))$ and let $\alpha\in k$. Because $A=(\lambda_A, \rho_A)$ is a CIF lie sub-superalgebra we have
$\lambda_A(x+y)=r_A(x+y)e^{i2\pi\omega(x+y)}\geqslant r_A(x)e^{i2\pi\omega_A(x)}\wedge r_A(y)e^{i2\pi\omega_A(y)}$

$=\lambda_A(x)\wedge\lambda_A(y)\geqslant te^{i2\pi s}$, since
$r_A(x+y)\geqslant r_A(x)\wedge r_A(y)\geqslant t$ and $\omega_A(x+y)\geqslant \omega_A(x)\wedge \omega_A(y)\geqslant s$ and
$\lambda_A(\alpha x)=r_A(\alpha x)e^{i2\pi\omega_A(\alpha x)}\geqslant r_A(x)e^{i2\pi\omega_A(x)}\geqslant te^{i2\pi s}$, since $r_A(\alpha x)\geqslant r_A(x)\geqslant t$ and $\omega_A(\alpha x)\geqslant \omega_A(x)\geqslant s$. Then  $x+y, \alpha x \in U(\lambda_A, (t,s))$. For any $x\in U(\lambda_A, (t, s))\subseteq V, x= x_0+x_1$, where $x_0 \in V_0, x_1\in V_1$. We show that $x_0, x_1 \in U(\lambda_A,(t,s))$.
since
\begin{eqnarray*}
\lambda_A(x)&=&r_A(x)e^{i2\pi\omega_A(x)}\\
&=&\lambda_{A_0}(x_0)\wedge \lambda_{A_1}(x_1)\\
&=&r_{A_0}(x_0)e^{i2\pi\omega_{A_0}(x_0)}\wedge r_{A_1}(x_1)e^{i2\pi\omega_{A_1}(x_1)}\\
&\geqslant& (r_{A_0}(x_0)\wedge r_{A_1}(x_1))e^{i2\pi(\omega_{A_0}(x_0)\wedge \omega_{A_1}(x_1))}\\
&\geqslant& te^{i2\pi s}.
\end{eqnarray*}
If $\lambda_{A_0}(x_0)\geqslant \lambda_{A_1}(x_1)$, we have
\begin{eqnarray*}
\lambda_A(x_1)&=&r_A(x_1)e^{i2\pi\omega_A(x_1)}\\
&=&r_{A_1}(x_1)e^{i2\pi\omega_{A_1}(x_1)}\\
&=&\lambda_{A_1}(x_1)\\
&\geqslant & te^{i2\pi s},so
\end{eqnarray*}
$x_1\in U(\lambda_A,(t, s))$ and\\
\begin{eqnarray*}
\lambda_A(x_0)&=&\lambda_{A_0}(x_0)\\
& = & r_{A_0}(x_0)e^{i2\pi\omega_{A_0}(x_0)}\\
&\geqslant&\lambda_{A_1}(x_1)\\
&\geqslant & te^{i2\pi s}, so
\end{eqnarray*}
$x_0\in U(\lambda_A,(t, s))$. Hence $U(\lambda_A,(t, s))$ is a $\mathbb{Z}_2$-graded vector subspace of $V$ for every $(t,s)\in{\rm Im}(\lambda_A)\cap{\rm Im}(\rho_A)$. Let $(t, s)\in{\rm Im}(\lambda_A)\cap {\rm Im}(\rho_A)$ and $x, y\in U(\lambda_A,(t, s))$. Then
$\lambda_A(x)\geqslant te^{i2\pi s}$, where $r_A(x)\geqslant t$ and $\omega_A(x)\geqslant s$ and $\lambda_A(y)\geqslant te^{i2\pi s}$, where $r_A(y)\geqslant t$ and $\omega_A(y)\geqslant s$. So $\lambda_A([x, y])\geqslant \lambda_A(x)\wedge \lambda_A(y)\geqslant te^{i2\pi s}$, which implies that $[x, y]\in U(\lambda_A, (t, s))$. Hence $(\lambda_A, (t, s))$ is a lie sub-superalgebra of $V$ for every $(t, s)\in{\rm Im}(\lambda_A) \cap{\rm Im}(\rho_A)$.\\
Also, for any $x\in L(\rho_A,(t, s))\subseteq V$, $x$ can be expressed as $x=x_0+x_1$, where $x_0\in V_0, x_1\in V_1$. We show that $x_0, x_1\in L(\rho_A,(t, s))$. since
\begin{eqnarray*}
\rho_A(x) &=&\hat{r}_A(x)e^{i2\pi\hat{\omega}_A(x)}\\
&=&\rho_{A_0}(x_0)\vee \rho_{A_1}(x_1)\\
&=& \hat{r}_{A_0}(x_0)e^{i2\pi \hat{\omega}_{A_0}(x_0)}\vee \hat{r}_{A_1}(x_1)e^{i2\pi \hat{\omega}_{A_1}(x_1)}\\
& \leqslant & (\hat{r}_{A_0}(x_0)\vee \hat{r}_{A_1}(x_1))e^{i2\pi (\hat{\omega}_{A_0}(x_0)\vee\hat{\omega}_{A_1}(x_1))}\\
&\leqslant & te^{i2\pi s},
\end{eqnarray*}
if $\rho_{A_0}(x_0)\geqslant \rho_{A_1}(x_1)$, we have $\rho_A(x_0)=\rho_{A_0}(x_0)\leqslant te^{i2\pi s}$, so $x_0\in L(\rho_A,(t, s))$ and $\rho_A(x_1)=\rho_{A_1}(x_1)\leqslant \rho_{A_0}(x_0)\leqslant te^{i2\pi s}$, so $x_1\in L(\rho_A, (t, s))$. Hence $L(\rho_A, (t, s))$ is a $\mathbb{Z}_2$-graded vector subspace of $V$ for any $(t, s)\in{\rm Im}(\lambda_A)\cap{\rm Im}(\rho_A)$.
Let $(t, s)\in{\rm Im}(\lambda_A)\cap{\rm Im}(\rho_A)$ and $x, y\in L(\rho_A,(t, s))$. Then $\rho_A(x)\leqslant te^{i2\pi s}$ and $\rho_A(y)\leqslant te^{i2\pi s}$. So $\rho_A([x, y])\leqslant \rho_A(x)\vee \rho_A(y) \leqslant te^{i2\pi s}$, which implies that $[x, y]\in L(\rho_A,(t, s))$. Hence $L(\rho_A,(t, s))$ is a lie sub-superalgebra of $V$ for any $(t, s)\in{\rm Im}(\lambda_A)\cap {\rm Im}(\rho_A)$.
\end{proof}
Next we suppose $A=(\lambda_A, \rho_A)$ is a CIF set of $V$, where $\lambda_A(x)=r_A(x)e^{i2\pi\omega_A(x)}$ and $\rho_A(x)=\hat{r}_A(x)e^{i2\pi\hat{\omega}_A(x)}$. Then define $\lambda_A^c(x)$ by $\lambda_A^c(x)=(1-r_A(x))e^{i2\pi(1-\omega_A(x))}$ and $\rho_A^c$ by $\rho_A^c=(1-\hat{r}_A(x))e^{i2\pi(1-\hat{\omega}_A(x))}$.
\begin{defn}$\ $\\
(1) $A^c=\{(x,\lambda_A(x), \lambda_A^c(x)): x\in V\}$. Shortly $A^c=(\lambda_A, \lambda_A^c)$.\\
(2) $A^L=\{(x,\rho_A^c(x) , \rho_A(x)): x\in V\}$. Shortly $A^L=(\rho_A^c,\rho_A)$.
\end{defn}
\begin{Theorem}\textnormal{\\
(1) If $A=(\lambda_A, \rho_A)$ is a CIF lie sub-suberalgebra (respectively CIF ideal) of $V$, then so is $A^c$.\\
(2) If $A=(\lambda_A, \rho_A)$ is a CIF lie sub-superalgebra (respectively CIF ideal) of $V$, then so is $A^L.$}
\end{Theorem}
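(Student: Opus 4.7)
The plan is to exploit the elementary duality $1-(a\wedge b)=(1-a)\vee(1-b)$ and $1-(a\vee b)=(1-a)\wedge(1-b)$ on $[0,1]$, applied separately to the modulus and the phase. Under this duality every $\geq\wedge$ inequality satisfied by a membership function translates into a $\leq\vee$ inequality for its complement, and vice versa. Since the CIF Lie sub-superalgebra axioms for $\lambda_A$ dualize exactly to the axioms needed for $\lambda_A^c$ to behave as a non-membership function (and similarly for $\rho_A$ and $\rho_A^c$), both $A^c=(\lambda_A,\lambda_A^c)$ and $A^L=(\rho_A^c,\rho_A)$ should inherit the sub-superalgebra (resp.\ ideal) structure essentially for free.

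For part (1), I would first check that $A^c$ is a legitimate CIF set: since $|\lambda_A(x)|+|\lambda_A^c(x)|=r_A(x)+(1-r_A(x))=1$, the unit-disk condition is satisfied. The membership component is unchanged, so every CIF vector-subspace and bracket inequality for $\lambda_A$ continues to hold. For the non-membership component, applying the duality to the inequalities $r_A(x+y)\geq r_A(x)\wedge r_A(y)$ and $\omega_A(x+y)\geq\omega_A(x)\wedge\omega_A(y)$ yields $\lambda_A^c(x+y)\leq\lambda_A^c(x)\vee\lambda_A^c(y)$; the scalar-multiplication and bracket inequalities are handled identically. In the ideal case one simply replaces $\wedge$ by $\vee$ in the bracket step. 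Part (2) is completely symmetric: set $\rho_A^c(x)=(1-\hat r_A(x))e^{i2\pi(1-\hat\omega_A(x))}$, note $|\rho_A^c(x)|+|\rho_A(x)|=1$, and dualize $\rho_A(x+y)\leq\rho_A(x)\vee\rho_A(y)$ into $\rho_A^c(x+y)\geq\rho_A^c(x)\wedge\rho_A^c(y)$, etc.

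The one step that requires genuine care, and which I would foreground, is verifying that the $\mathbb{Z}_2$-graded decomposition persists, i.e.\ that $A^c=A_0^c\oplus A_1^c$ and $A^L=A_0^L\oplus A_1^L$. For $A^c$, the membership side is identical to $A$, so the Remark following Definition~\ref{def-2} gives $\lambda_{A^c}(x)=\lambda_{A_0}(x_0)\wedge\lambda_{A_1}(x_1)$ automatically. On the non-membership side one computes
\begin{eqnarray*}
\lambda_A^c(x) &=& (1-r_{A_0}(x_0)\wedge r_{A_1}(x_1))\,e^{i2\pi(1-\omega_{A_0}(x_0)\wedge\omega_{A_1}(x_1))}\\
&=& \lambda_{A_0}^c(x_0)\vee\lambda_{A_1}^c(x_1),
\end{eqnarray*}
where the second equality uses the homogeneity of $A_0$ with $A_1$ so that the modulus-wise and phase-wise maxima are realized by the same summand. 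The analogous computation handles $A^L$.

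The homogeneity bookkeeping in the previous display is the only real obstacle; everything else is a mechanical application of the $1-(\cdot)$ duality to each of the three defining inequalities (additivity, scalar multiplication, Lie bracket). Once the grading is confirmed, parts (1) and (2), in both the sub-superalgebra and ideal forms, follow at once.
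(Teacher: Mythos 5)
Your proposal is correct and follows essentially the same route as the paper's proof: the membership component of $A^c$ (resp.\ non-membership of $A^L$) is untouched, the other component is handled by the componentwise duality $1-(a\wedge b)=(1-a)\vee(1-b)$ applied to modulus and phase, and the persistence of the $\mathbb{Z}_2$-grading is verified via the identity $\lambda_A^c(x)=\lambda_{A_0}^c(x_0)\vee\lambda_{A_1}^c(x_1)$, which the paper also derives (through the extensions $\mathfrak{a}_0^c,\mathfrak{a}_1^c$ and the $1-(\cdot)$ computation) using homogeneity. You correctly identify the grading step as the only point requiring real care, which matches where the paper spends its effort.
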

\begin{proof}
(1) Since $(\lambda_A, \rho_A)$ is a $\mathbb{Z}_2$-graded CIF vector subspace of $V$, we have $A=A_0+A_1$, where $A_0=(\lambda_{A_0},\rho_{A_0}), A_1=(\lambda_{A_1}, \rho_{A_1})$ are CIF vector subspaces of $V_0$ and $V_1$ respectively, and for $x\in V$, $\lambda_A(x)=\sup_{x=a+b}\lbrace\lambda_{A_0}(a)\wedge \lambda_{A_1}(b)\rbrace.$
Define $A_0^c= (\lambda_{A_0}, \lambda_{A_0}^c)$, $A_1^c=(\lambda_{A_1}, \lambda_{A_1}^c)$ and
define $\mathfrak{a}_0^c= (\lambda_{\mathfrak{a}_0}, \lambda_{\mathfrak{a}_0}^c)$ and $\mathfrak{a}_1^c=(\lambda_{\mathfrak{a}_1}, \lambda_{\mathfrak{a}_1}^c)$.
Obviously, $(\mathfrak{a}_0^c, \mathfrak{a}_1^c)$ are the extensions of $(A_0^c, A_1^c)$. In order to prove
$A^c=\mathfrak{a}_0^c+\mathfrak{a}_1^c$ we only need to show that
$\lambda_A^c(x)=\inf_{x=a+b}\lbrace\lambda_{\mathfrak{a}_0}^c(a)\vee\lambda_{\mathfrak{a}_1}^c(b)\rbrace.$ Since $A$ is homogeneous indeed,
\begin{eqnarray*}
1-\lambda_A^c(x)&=&\sup_{x=a+b}\lbrace(1-\lambda_{\mathfrak{a}_0}^c(a))\wedge (1-\lambda_{\mathfrak{a}_1}^c(b))\rbrace\\
&=&\sup_{x=a+b}\lbrace(1-(\lambda_{\mathfrak{a}_0}^c(a)\vee\lambda_{\mathfrak{a}_1}^c(b))\rbrace\\
&=&\sup_{x=a+b}\lbrace1-((1-r_{\mathfrak{a}_0}(a))e^{i2\pi(1-w_{\mathfrak{a}_0}(a))}\vee(1-r_{\mathfrak{a}_1}(b))e^{i2\pi(1-\omega_{\mathfrak{a}_1}(b))})\rbrace\\
&=&\sup_{x=a+b}\lbrace1-((1-r_{\mathfrak{a}_0}(a)\vee1-r_{\mathfrak{a}_1}(b))e^{i2\pi((1-\omega_{\mathfrak{a}_0}(a))\vee(1-\omega_{\mathfrak{a}_1}(b)))})\rbrace\\
&=&1-\inf_{x=a+b}\lbrace1-r_{\mathfrak{a}_0}(a)\vee1-r_{\mathfrak{a}_1}(b)\rbrace e^{i2\pi(1-(1-\omega_{\mathfrak{a}_0}(a)\vee 1-w_{\mathfrak{a}_1}(b)))}\\
&=&1-\inf_{x=a+b}\lbrace (1-r_{\mathfrak{a}_0}(a))e^{i2\pi(1-w_{\mathfrak{a}_0}(a))}\vee(1-r_{\mathfrak{a}_1}(b))e^{i2\pi(1-\omega_{\mathfrak{a}_1}(b))}
\rbrace\\
&=&1-\inf_{x=a+b}\lbrace\lambda_{\mathfrak{a}_0}^c(a)\vee\lambda_{\mathfrak{a}_1}^c(b)\rbrace, so
\end{eqnarray*}
\[\lambda_A^c(x)=\underset{x=a+b}{\inf}\lbrace\lambda_{\mathfrak{a}_0}^c(a)\vee\lambda_{\mathfrak{a}_1}^c(b)\rbrace
=\lambda_{A_0}^c(x_0)\vee\lambda_{A_1}^c(x_1).\ \ \ \ \ \ \ \ \ \ \ \ \ \ \ \ \ \ \ \ \ \ \ \ \]
Moreover, it is easy to get $\lambda_{A'_0}^c(x)\vee\lambda_{A'_1}^c(x)=\begin{cases} 0 & :\quad x=0\\
1 & :\quad x\neq 0\end{cases}$,
we have that $A^c=A_0^c \oplus A_1^c$ is a $\mathbb{Z}_2$-graded CIF vector subspace of $V$.
Let $x,y\in V$. Since $\lambda_A([x,y])\geq\lambda_A(x)\wedge\lambda_A(y)$, we have $$1-\lambda_A([x,y])\geq(1-\lambda_A^c(x))\wedge(1-\lambda_A^c(y)),$$ and so $1-\lambda_A([x,y])\geq1-(\lambda_A^c(x)\vee\lambda_A^c(y))$. Hence $\lambda_A([x,y])\leq\lambda_A^c(x)\vee\lambda_A^c(y)$. So $A^c=(\lambda_A, \lambda_A^c)$ is a CIF lie sub-superalgebra of $V$. By the similar way we can prove the case of CIF ideal.\\
(2) The proof is similar to the proof of (1).
\end{proof}
By using the above result it is not difficult to verify that the following theorem is valid.
\begin{Theorem}\textnormal{
$A=(\lambda_A, \rho_A)$ is a CIF lie sub-superalgebra (respectively CIF ideal) of V If and only If $A^c$ and $A^{L}$ are CIF lie sub-superalgebras (respectively CIF ideals) of $V$.}
\end{Theorem}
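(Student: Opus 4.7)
The plan is to split the equivalence into two implications. The forward direction, that $A$ being a CIF Lie sub-superalgebra (or ideal) of $V$ implies both $A^c$ and $A^L$ are, is exactly the content of the preceding theorem, so the only new content is the reverse implication. I would therefore assume that $A^c=(\lambda_A,\lambda_A^c)$ and $A^L=(\rho_A^c,\rho_A)$ are both CIF Lie sub-superalgebras of $V$ and deduce that $A=(\lambda_A,\rho_A)$ is one as well; the CIF ideal case will then follow by the same argument with $\wedge$ and $\vee$ interchanged at the bracket step.

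The bracket conditions are essentially free once the hypotheses are unpacked. Reading off the first-coordinate clause of the CIF Lie sub-superalgebra axiom for $A^c$ gives $\lambda_A([x,y])\geq\lambda_A(x)\wedge\lambda_A(y)$, and the second-coordinate clause for $A^L$ gives $\rho_A([x,y])\leq\rho_A(x)\vee\rho_A(y)$. These are precisely the two inequalities demanded of $A$ by the definition of a CIF Lie sub-superalgebra, so there is nothing further to verify on the bracket side. The only care needed is that the ``wrong'' coordinate of each hypothesis (the $\lambda_A^c$-part of $A^c$ and the $\rho_A^c$-part of $A^L$) is discarded, since we do not need it for the conclusion.

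The substantive step, and the one I expect to be the main obstacle, is recovering a $\mathbb{Z}_2$-graded decomposition of $A$ from the separate $\mathbb{Z}_2$-gradings of $A^c$ and $A^L$. Since $A^c$ is $\mathbb{Z}_2$-graded there exist CIF vector subspaces $B_0,B_1$ of $V_0,V_1$ with $A^c=B_0\oplus B_1$, and reading the first coordinate gives a direct-sum presentation of $\lambda_A$ in the sense of Definition~\ref{def-1}. The same argument applied to $A^L$ produces CIF vector subspaces $C_0,C_1$ of $V_0,V_1$ and a direct-sum presentation of $\rho_A$. I would set $A_i=(\lambda_{B_i},\rho_{C_i})$ for $i=0,1$, verify that each $A_i$ is a CIF vector subspace of $V_i$, and invoke the remark following Definition~\ref{def-2} to conclude that $A=A_0\oplus A_1$ as $\mathbb{Z}_2$-graded CIF vector subspaces.

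The compatibility point inside this last step---that the two independent gradings coming from $A^c$ and $A^L$ are both realisations of the canonical splitting $V=V_0\oplus V_1$---follows because the extensions $\mathfrak{a}_0,\mathfrak{a}_1$ built in Definition~\ref{def-2} are determined pointwise by the restrictions of the given CIF data to $V_0$ and $V_1$. Consequently $\lambda_{B_i}$ must coincide with the restriction of $\lambda_A$ to $V_i$, and $\rho_{C_i}$ with the restriction of $\rho_A$ to $V_i$, so the two independently constructed gradings pair together consistently. Once this is secured, combining the extracted bracket inequalities with the verified $\mathbb{Z}_2$-grading delivers the conclusion for both the sub-superalgebra and ideal cases.
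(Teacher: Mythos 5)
Your proposal is correct and follows exactly the route the paper intends: the forward implication is the preceding theorem, and the converse is obtained by reading off the $\lambda_A$-coordinate of the hypotheses on $A^c$ and the $\rho_A$-coordinate of those on $A^L$, reassembling the grading from the two halves. The paper itself gives no proof (it only remarks that the result is ``not difficult to verify'' from the previous theorem), so your write-up of the reverse direction --- in particular the observation that the components of any $\mathbb{Z}_2$-grading are forced to be the restrictions of the membership data to $V_0$ and $V_1$, which makes the two gradings compatible --- supplies precisely the detail the paper leaves to the reader.
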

\begin{Theorem}\textnormal{
If $A=(\lambda_A,\rho_A)$ is a CIF set of $V$ such that all non-empty level sets $U(\lambda_A,(t, s))$ and $L(\rho_A,(t, s))$ are lie sub-superalgebras (respectively ideals)of $V$, then $A=(\lambda_A, \rho_A)$ is CIF lie sub-superalgebra (respectively CIF ideal) of $V$.}
\end{Theorem}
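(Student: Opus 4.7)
The plan is to prove the converse by a threshold argument: for each algebraic property I need to verify of $\lambda_A$ or $\rho_A$ at a pair of points $x,y$, I will pick the level parameters so that $x$ and $y$ both sit inside a common non-empty level set, and then invoke the hypothesis to conclude the corresponding closure. Throughout write $\lambda_A(x)=r_A(x)e^{i2\pi\omega_A(x)}$ and $\rho_A(x)=\hat r_A(x)e^{i2\pi\hat\omega_A(x)}$, and recall that $x\in U(\lambda_A,(t,s))$ is equivalent to $\lambda_A(x)\geq te^{i2\pi s}$, with the dual statement for $L(\rho_A,(t,s))$.

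First I would establish the CIF vector-subspace axioms for $A$. For $x,y\in V$ and $\alpha\in K$, set $t=r_A(x)\wedge r_A(y)$ and $s=\omega_A(x)\wedge\omega_A(y)$; then $x,y\in U(\lambda_A,(t,s))$, which is non-empty and therefore a Lie sub-superalgebra (in particular a vector subspace) by hypothesis, so $x+y$ and $\alpha x$ lie there, giving $\lambda_A(x+y)\geq\lambda_A(x)\wedge\lambda_A(y)$ and $\lambda_A(\alpha x)\geq\lambda_A(x)$. The dual choice $t=\hat r_A(x)\vee\hat r_A(y)$, $s=\hat\omega_A(x)\vee\hat\omega_A(y)$ inside $L(\rho_A,(t,s))$ yields the matching conditions for $\rho_A$.

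Next I would verify the $\mathbb{Z}_2$-graded decomposition $A=A_0\oplus A_1$. For $x=x_0+x_1$ with $x_i\in V_i$, taking $t=r_A(x)$ and $s=\omega_A(x)$ and using the $\mathbb{Z}_2$-gradedness of the Lie sub-superalgebra $U(\lambda_A,(t,s))$ forces $x_0,x_1\in U$, so $\lambda_A(x_i)\geq\lambda_A(x)$; hence $\lambda_A(x_0)\wedge\lambda_A(x_1)\geq\lambda_A(x)$. The reverse inequality is exactly the additive closure from the previous paragraph, giving $\lambda_A(x)=\lambda_A(x_0)\wedge\lambda_A(x_1)$, and symmetrically $\rho_A(x)=\rho_A(x_0)\vee\rho_A(x_1)$. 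Setting $A_i$ to be the restriction of $A$ to $V_i$ and extending to $\mathfrak{a}_i$ as in Definition~\ref{def-2}, the supremum and infimum formulas collapse to these single-summand values because $\lambda_{\mathfrak{a}_i}$ vanishes off $V_i$ and $\rho_{\mathfrak{a}_i}$ equals $1$ off $V_i$, yielding $A=A_0\oplus A_1$.

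Finally, for the bracket condition in the sub-superalgebra case I reuse the $(t,s)$ of the first step: $[x,y]\in U(\lambda_A,(t,s))\cap L(\rho_A,(t,s))$ gives $\lambda_A([x,y])\geq\lambda_A(x)\wedge\lambda_A(y)$ and $\rho_A([x,y])\leq\rho_A(x)\vee\rho_A(y)$. For the ideal case, the level sets are ideals, so $[x,y]\in U(\lambda_A,(t,s))$ whenever $x\in U$ with $y\in V$ arbitrary; applying this with $(t,s)=(r_A(x),\omega_A(x))$ gives $\lambda_A([x,y])\geq\lambda_A(x)$, and analogously the choice $(t,s)=(r_A(y),\omega_A(y))$ gives $\lambda_A([x,y])\geq\lambda_A(y)$, so together $\lambda_A([x,y])\geq\lambda_A(x)\vee\lambda_A(y)$; the analogous argument with $L(\rho_A,\cdot)$ delivers $\rho_A([x,y])\leq\rho_A(x)\wedge\rho_A(y)$. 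The main obstacle I anticipate is the bookkeeping of the $\mathbb{Z}_2$-graded step: one must carefully confirm that the formal supremum over all decompositions $x=a+b$ in Definition~\ref{def-2} really collapses to the single value arising from the canonical $\mathbb{Z}_2$-splitting, which requires combining the gradedness of the level sets with the additive closure just established; the remaining closure properties are routine applications of the threshold trick.
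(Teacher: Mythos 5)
Your proposal is correct and follows essentially the same level-set/threshold argument as the paper's own proof: both place $x$ and $y$ in a common non-empty upper (resp.\ lower) level cut, invoke the hypothesis to get closure under addition, scalars, the grading and the bracket, and then show the supremum/infimum in the definition of $A_0\oplus A_1$ collapses to the canonical $\mathbb{Z}_2$-splitting. Your only departures are minor refinements --- choosing the componentwise thresholds $t=r_A(x)\wedge r_A(y)$, $s=\omega_A(x)\wedge\omega_A(y)$ instead of assuming $\lambda_A(x)$ and $\lambda_A(y)$ are comparable as the paper does, and writing out the ideal case that the paper dismisses as ``similar.''
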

\begin{proof}
Let $x, y\in V$ and $K$. We may assume that $\lambda_A(y)\geq \lambda_A(x)= t_1e^{i2\pi s_1}\and \ \rho_A(y)\leq \rho_A(x)=t_0e^{i2\pi s_0},$ then $x, y\in U(\lambda_A,(t_1, s_1))$ and $x, y\in L(\rho_A,(t_0, s_0))$. Since $U(\lambda_A, (t_1, s_1))$ and $L(\rho_A,(t_0, s_0))$ are vector subspaces of $V$, we get $x+y, \alpha x\in U(\lambda_A, (t_1, s_1))$ and $x+y, \alpha x \in L(\rho_A,(t_0, s_0))$. So, $\lambda_A(\alpha x)\geq \lambda_A(x)=t_1e^{i2\pi s_1}$ and $\lambda_A(x+y)\geq t_1e^{i2\pi s_1}=\lambda_A(x)\wedge \lambda_A(y)$, $\rho_A(\alpha x)\leqslant t_0e^{i2\pi s_0}=\rho_A(x)$ and $\rho_A(x+y)\leqslant t_0e^{i2\pi s_0}=\rho_A(x)\vee \rho_A(y)$.
Now, we show that $A=(\lambda_A, \rho_A)$ has a $\mathbb{Z}_2$-graded structure.
Define $A_0=(\lambda_{A_0}, \rho_{A_0})$ where $\lambda_{A_0}: V_0 \rightarrow \mathbb{C}$ by $x \mapsto \lambda_A(x)$, $ \rho_{A_0}: V_0\rightarrow \mathbb{C}$, by $x \mapsto \rho_A(x)$ and define $A_1=(\lambda_{A_1}, \rho_{A_1})$ where $\lambda_{A_1}: V_1 \rightarrow \mathbb{C}$ by $x\mapsto \lambda_A(x)$, $\rho_{A_1}: V_1 \rightarrow \mathbb{C}$ by $x\mapsto \rho_A(x)$. We extend $A_0=(\lambda_{A_0}, \rho_{A_0})$, $A_1=(\lambda_{A_1}, \rho_{A_1})$ to $\mathfrak{a}_0=(\lambda_{\mathfrak{a}_0}, \rho_{\mathfrak{a}_0})$, $\mathfrak{a}_1=(\lambda_{\mathfrak{a}_1}, \rho_{\mathfrak{a}_1})$ as follows. Define $\mathfrak{a}_0=(\lambda_{\mathfrak{a}_0}, \rho_{\mathfrak{a}_0})$ by
$$\lambda_{\mathfrak{a}_0}(x)=\begin{cases}\lambda_{A_0}(x) & if \quad x\in V_0\\ 0 & if \quad  x\not\in V_0
\end{cases} \ , \ \rho_{\mathfrak{a}_0}(x)=\begin{cases}\rho_{A_0}(x) & if \quad x\in V_0\\ 1 & if \quad  x\not\in V_0 \end{cases}. $$
And we define $\mathfrak{a}_1=(\lambda_{\mathfrak{a}_1}, \rho_{\mathfrak{a}_1})$ by
$$\lambda_{\mathfrak{a}_1}(x)=\begin{cases}\lambda_{A_1}(x) & if \quad x\in V_1\\ 0 & if \quad  x\not\in V_1
\end{cases} \ , \ \rho_{\mathfrak{a}_1}(x)=\begin{cases}\rho_{A_1}(x) & if \quad x\in V_1\\ 1 & if \quad  x\not\in V_1 \end{cases} $$
Then it is obvious that $\mathfrak{a}_0$, $\mathfrak{a}_1$ are CIF vector subspaces of $V$ and for any $0\neq x\in V$ we have $\mathfrak{a}_0 \cap \mathfrak{a}_1=(\lambda_{\mathfrak{a}_0}(x)\wedge  \lambda_{\mathfrak{a}_1}(x), \rho_{\mathfrak{a}_0}(x)\vee \rho_{\mathfrak{a}_1}(x))=(0,1).$
To show that $A=A_0\oplus A_1$, let $x\in V$. We may assume that $\lambda_A(x)=te^{i2\pi s}$, then $x \in U(\lambda_A, (t, s))$. Because $U(\lambda_A,(t, s))$ is a $\mathbb{Z}_2$ -graded vector subspace of $V$, we have $x=x_0+x_1$, where $x_0\in V_0 \cap U(\lambda_A, (t, s))$ and $x_1\in V_1\cap U(\lambda_A,(t, s))$. Since $te^{i2\pi s}=\lambda_A(x)=\lambda_A(x_0+x_1)\geq \lambda_A(x_0)\wedge \lambda_A(x_1)$, if $ \lambda_A(x_0)\geq \lambda_A(x_1)$, then $te^{i2\pi s}\geq \lambda_A(x_1)\geq te^{i2\pi s}$, we have $\lambda_A(x_1)=te^{i2\pi s}.$ Similarly, if $\lambda_A(x_1)\geq \lambda_A(x_0)$, we have $\lambda_A(x_0)=te^{i2\pi s}$. Hence
\begin{eqnarray*}
\lambda_A(x) & = & te^{i2\pi s}\\
& = &\lbrace \lambda_A(x_0)\wedge \lambda_A(x_1)| x=x_0+x_1\rbrace \\
& = & \lbrace\lambda_{A_0}(x_0)\wedge \lambda_{A_1}(x_1)| x= x_0+x_1\rbrace\\
 & = & \sup_{x=a+b}\lbrace \lambda_{\mathfrak{a}_0}(a)\wedge \lambda_{\mathfrak{a}_1}(b)\rbrace \\
 & = & \lambda_{\mathfrak{a}_0+\mathfrak{a}_1}(x)\\
 & = & \lambda_{A_0\oplus A_1}(x)\ \ \ \ \ \ \ \ ( \ Since\  \mathfrak{a}_0\cap\mathfrak{a}_1=(0, 1)\ ) \
\end{eqnarray*}
Also, we may assume that $\rho_A(x)=te^{i2\pi s}$, then $x\in L(\rho_A,(t, s)).$ Because $L(\rho_A,(t, s))$ is a $\mathbb{Z}_2$-graded vector subspace of $V$, we have $x=x_0+x_1$, where $x_0\in V_0\cap L(\rho_A, (t, s))$ and $x_1\in V_1\cap L(\rho_A,(t, s))$. Since $te^{i2\pi s}=\rho_A(x)=\rho_A(x_0+x_1)\leq \rho_A(x_0)\wedge\rho_A(x_1)$, if $\rho_A(x_0)\geq \rho_A(x_1)$, then $te^{i2\pi s}\leq \rho_A(x_0)\leq te^{i2\pi s}$, we have $\rho_A(x_0)=te^{i2\pi s}$. Similarly, if $\rho_A(x_1)\geq \rho_A(x_0)$, we have $\rho_A(x_1)=te^{i2\pi s}$. Hence
\begin{eqnarray*}
\rho_A(x) & = & te^{i2\pi s}\\
&=&\lbrace \rho_A(x_0)\vee \rho_A(x_1)| x=x_0+x_1\rbrace\\
&=&\lbrace \rho_{A_0}(x_0)\vee \rho_{A_1}(x_1)|x=x_0+x_1\rbrace\\
&=&\inf_{x=a+b}\lbrace \rho_{\mathfrak{a}_0}(a)\vee \rho_{\mathfrak{a}_1}(b)\rbrace\\
&=& \rho_{\mathfrak{a}_0+\mathfrak{a}_1}(x)\\
&=&\rho_{A_0\oplus A_1}(x)
\end{eqnarray*}
So $A=A_0\oplus A_1$, and hence $A=(\lambda_A, \rho_A)$ is a CIF vector subspace of $V$.
Let $x, y\in V$ and assume that $\lambda_A(y)\geq \lambda_A(x)\geq te^{i2\pi s}$, where $t,s\in [0, 1]$. Then $x, y\in U(\lambda_A,(t, s))$. Because $U(\lambda_A,(t, s))$ is a lie sub-superalgebra of $V$, we get $[x, y]\in (\lambda_A,(t, s))$, then $\lambda_A([x, y])\geq te^{i2\pi s}=\lambda_A(x)\wedge \lambda_A(y)$.\\
We also assume that $\rho_A(x)\leq te^{i2\pi s}\leq \rho_A(y)$ for some $t,s\in [0, 1]$, then $x, y \in L(\rho_A,(t, s))$. Because $L(\rho_A,(t, s))$ is a lie sub-superalgebra of $V$. We get $[x, y]\in L(\rho_A,(t, s))$, then $\rho_A([x, y])\leq te^{i2\pi s}\leq \rho_A(x)\vee \rho_A(y)$. Thus $A=(\lambda_A, \rho_A)$ is CIF lie sub-superalgebra of $V$. The case of CIF ideal is similar to show.
\end{proof}
Let $A=(\lambda_A, \rho_A)$ and $B=(\lambda_B, \rho_B)$ be $CIF$ vector subspaces of $V$, where $\lambda_A=r_Ae^{i2\pi\omega_A},\lambda_B=r_Be^{i2\pi\omega_B}$and $\rho_A=\hat{r}_Ae^{i2\pi\hat{\omega}_A}, \rho_B=\hat{r}_Be^{i2\pi\hat{\omega}_B}$. We have to remark that if $A$ is homogenous with $B$ then, by definition~\ref{def-1}, the $CIF$ set $A+B=(\lambda_{A+B}, \rho_{A+B})$ of $V$ is defined by
\begin{align*}
\lambda_{A+B}(x)&=\sup_{x=a+b}\{(r_A(a)\wedge r_B(b))\}e^{i2\pi\sup_{x=a+b}\lbrace (\omega_A(a)\wedge\omega_B(b))\rbrace}\\
&=r_{A+B}(x)e^{i2\pi\omega_{A+B}(x)},
\end{align*}
and
\begin{align*}
\rho_{A+B}(x)&=\inf_{x=a+b}\{(\hat{r}_A(a)\vee\hat{r}_B(b))\}e^{i2\pi\inf_{x=a+b}\lbrace (\hat{\omega}_A(a)\vee\hat{\omega}_B(b))\rbrace}\\
&=\hat{r}_{A+B}(x)e^{i2\pi\hat{\omega}_{A+B}(x)}.
\end{align*}
Now we have the following two results:
\begin{Theorem}
If $A=(\lambda_A, \rho_A)$ and $B=(\lambda_B, \rho_B)$ are CIF lie sub-superalgebras (respectively CIF ideals) of $V=V_0+V_1$, then so is $A+B=(\lambda_{A+B}, \rho_{A+B})$.
\end{Theorem}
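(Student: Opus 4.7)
The plan is to verify in turn the three properties required of a CIF Lie sub-superalgebra (respectively CIF ideal): that $A+B$ is a CIF vector subspace, that it is $\mathbb{Z}_2$-graded, and that it satisfies the bracket compatibility condition. The first is already recorded as Lemma~\ref{mylemma-1}, since Definition~\ref{def-1} presupposes that $A$ is homogeneous with $B$ in order for $A+B$ to be defined at all, so this step is free.

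For the $\mathbb{Z}_2$-grading, I would write $A=A_0\oplus A_1$ and $B=B_0\oplus B_1$ with their extensions $\mathfrak{a}_0,\mathfrak{a}_1,\mathfrak{b}_0,\mathfrak{b}_1$ as in Definition~\ref{def-2}. The natural candidates for the even and odd parts of $A+B$ are $(A+B)_0:=A_0+B_0$ and $(A+B)_1:=A_1+B_1$, each of which is a CIF vector subspace of $V_0$, respectively $V_1$, by Lemma~\ref{mylemma-1} applied in the two components. The identity $A+B=(A+B)_0\oplus(A+B)_1$ should then follow by unwinding the sup/inf formulas of Definition~\ref{def-1} and using that every $x\in V$ admits a unique decomposition $x=x_0+x_1$ with $x_i\in V_i$; this is directly analogous to the calculation already recorded in the remark following Definition~\ref{def-2}.

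The bulk of the work is the bracket condition, and the ideal case is the cleaner one. Fix $x,y\in V$ and any decomposition $x=a+b$. Then $[x,y]=[a,y]+[b,y]$ is a valid decomposition of $[x,y]$, and since $A$ and $B$ are CIF ideals we have $r_A([a,y])\geq r_A(a)\vee r_A(y)\geq r_A(a)$ and $\omega_A([a,y])\geq\omega_A(a)$, with analogous bounds from $B$ on $[b,y]$. Hence
\[
r_{A+B}([x,y])\;\geq\;r_A([a,y])\wedge r_B([b,y])\;\geq\;r_A(a)\wedge r_B(b),
\]
with the same inequality for $\omega_{A+B}$. Taking the supremum over decompositions of $x$ yields $\lambda_{A+B}([x,y])\geq\lambda_{A+B}(x)$; by the symmetric argument in $y$ one also gets $\lambda_{A+B}([x,y])\geq\lambda_{A+B}(y)$, and so $\lambda_{A+B}([x,y])\geq\lambda_{A+B}(x)\vee\lambda_{A+B}(y)$. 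The estimate for $\rho_{A+B}$ is dual, with $\inf$ and $\vee$ in place of $\sup$ and $\wedge$.

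The sub-superalgebra case is where I anticipate the main difficulty. With only the weaker hypothesis $\lambda_A([u,v])\geq\lambda_A(u)\wedge\lambda_A(v)$, one cannot discard the factor $\lambda_A(y)$ as above. Given decompositions $x=a_1+b_1$ and $y=a_2+b_2$, the task is to exhibit a decomposition $[x,y]=c+d$ whose value $r_A(c)\wedge r_B(d)$ dominates $r_A(a_1)\wedge r_B(b_1)\wedge r_A(a_2)\wedge r_B(b_2)$. The expansion $[x,y]=[a_1,a_2]+[a_1,b_2]+[b_1,a_2]+[b_1,b_2]$ mixes the $A$-favored and $B$-favored components, and the cross terms $[a_1,b_2]$ and $[b_1,a_2]$ admit no a priori control from either $A$ or $B$ alone. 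My plan is to try the grouping $c=[a_1,a_2]$, $d=[a_1,b_2]+[b_1,a_2]+[b_1,b_2]$ and see whether the homogeneity of $A$ with $B$, combined with repeated use of the sub-superalgebra condition, closes the estimate; failing that, I would investigate alternative regroupings such as $c=[a_1,y]$, $d=[b_1,y]$, and weigh the possibility that a fully clean argument may really require the ideal hypothesis.
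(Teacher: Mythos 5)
Your handling of the grading and of the ideal case is correct and is essentially the paper's route. The paper also takes $(A+B)_\alpha=A_\alpha+B_\alpha$ and verifies $A+B=(A+B)_0\oplus(A+B)_1$ by the same sup/inf unwinding you sketch, and for the ideal case it uses exactly your decomposition $[x,y]=[a,y]+[b,y]$ together with $r_A([a,y])\geq r_A(a)$, merely dressed up as a proof by contradiction with an intermediate threshold $t$ between $r_{A+B}([x,y])$ and $r_{A+B}(x)$; your direct supremum argument is cleaner and proves the same thing.

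The sub-superalgebra case, which you leave open, is a genuine gap --- but the difficulty you flag is real and is not resolved by the paper either. The paper's part (2) expands $[x,y]=[a'+b',c'+d']$ into the four brackets $[a',c'],[a',d'],[b',c'],[b',d']$ and then bounds $r_A([a',d'])$ by $r_A(a')\wedge r_A(d')$ and $r_B([b',c'])$ by $r_B(b')\wedge r_B(c')$; this requires lower bounds on $r_A(d')$ and $r_B(c')$, precisely the quantities one has no control over (only $r_B(d)>t$ and $r_A(c)>t$ were arranged), and the displayed chain silently passes from $\sup_{y=c'+d'}\{r_A(c')\wedge r_A(d')\}$ to $r_A(c)\wedge r_B(d)$, which does not follow. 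No regrouping will rescue this: the crisp analogue is already false, e.g.\ in $\mathfrak{sl}_2$ (viewed as a Lie superalgebra with trivial odd part) the abelian subalgebras $\langle e\rangle$ and $\langle f\rangle$ sum to $\langle e,f\rangle$, which is not closed under the bracket since $[e,f]=h$; taking $A$ and $B$ to be the corresponding characteristic-function CIF sets gives $\lambda_{A+B}([e,f])=0$ while $\lambda_{A+B}(e)\wedge\lambda_{A+B}(f)=1$. So your instinct that a clean argument ``may really require the ideal hypothesis'' is correct: the statement should be restricted to CIF ideals, and for that case your proof is complete.
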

\begin{proof}
For $\alpha=0,1$, define $(A+B)_\alpha=(\lambda_{(A+B)_\alpha},\rho_{(A+B)_\alpha})$, where $\lambda_{(A+B)_\alpha}=\lambda_{A_\alpha}+\lambda_{B_\alpha}$ and $\rho_{(A+B)_\alpha}=\rho_{A_\alpha}+\rho_{B_\alpha}$. By Lemma~\ref{mylemma-1} we know that they are CIF subspaces of $V_\alpha$ ($\alpha=0,1$).\\
Again for $\alpha=0,1$, define $(\mathfrak{a}+\mathfrak{b})_\alpha=(\lambda_{(\mathfrak{a}+\mathfrak{b})_\alpha},\rho_{(\mathfrak{a}+\mathfrak{b})_\alpha})$, where $\lambda_{(\mathfrak{a}+\mathfrak{b})_\alpha}=\lambda_{\mathfrak{a}_\alpha}+\lambda_{\mathfrak{b}_\alpha}$ and $\rho_{(\mathfrak{a}+\mathfrak{b})_\alpha}=\rho_{\mathfrak{a}_\alpha}+\rho_{\mathfrak{b}_\alpha}$. Obviously, $(\mathfrak{a}+\mathfrak{b})_\alpha$ are extensions of $(A+B)_\alpha$ for $\alpha=0,1$ (respectively). Let $x\in V$. Then
\begin{align*}
&\lambda_{(A+B)}(x)=\sup_{x=a+b}\{\lambda_A(a)\wedge\lambda_B(b)\}\\
&=\sup_{x=a+b}\{\lambda_{\mathfrak{a}_0+\mathfrak{a}_1}(a)\wedge\lambda_{\mathfrak{b}_0+\mathfrak{b}_1}(b)\}\\
&=\sup_{x=a+b}\{\sup_{a=m+n}\{\lambda_{\mathfrak{a}_0}(m)\wedge\lambda_{\mathfrak{a}_1}(n)\}\wedge\sup_{b=k+l}\{\lambda_{\mathfrak{b}_0}(k)\wedge\lambda_{\mathfrak{b}_1}(l)\}\}\\
&=\sup_{x=a+b}\{\sup_{a=m+n}\{r_{\mathfrak{a}_0}(m)e^{i2\pi\omega_{\mathfrak{a}_0}(m)}\wedge r_{\mathfrak{a}_1}(n)e^{i2\pi\omega_{\mathfrak{a}_1}(n)}\}
\wedge\sup_{b=k+l}\{r_{\mathfrak{b}_0}(k)e^{i2\pi\omega_{\mathfrak{b}_0}(k)}\wedge r_{\mathfrak{b}_1}(l)e^{i2\pi\omega_{\mathfrak{b}_1}(l)}\}\}\\
&=\sup_{x=a+b}\{\sup_{a+b=m+n+k+l}\{r_{\mathfrak{a}_0}(m)e^{i2\pi\omega_{\mathfrak{a}_0}(m)}\wedge r_{\mathfrak{a}_1}(n)e^{i2\pi\omega_{\mathfrak{a}_1}(n)}
\wedge r_{\mathfrak{b}_0}(k)e^{i2\pi\omega_{\mathfrak{b}_0}(k)}\wedge r_{\mathfrak{b}_1}(l)e^{i2\pi\omega_{\mathfrak{b}_1}(l)}\}\}\\
&=\sup_{a+b=m+n+k+l}\{r_{\mathfrak{a}_0}(m)e^{i2\pi\omega_{\mathfrak{a}_0}(m)}\wedge r_{\mathfrak{b}_0}(k)e^{i2\pi\omega_{\mathfrak{b}_0}(k)}\}
\wedge\sup_{a+b=m+n+k+l}\{r_{\mathfrak{a}_1}(n)e^{i2\pi\omega_{\mathfrak{a}_1}(n)}\wedge r_{\mathfrak{b}_1}(l)e^{i2\pi\omega_{\mathfrak{b}_1}(l)}\}\\
&=\sup_{a+b=m+n+k+l}\{\sup_{m+k}\{(r_{\mathfrak{a}_0}(m)\wedge r_{\mathfrak{b}_0}(k))e^{i2\pi(\omega_{\mathfrak{a}_0}(m)\wedge \omega_{\mathfrak{b}_0}(k))}\}
\wedge\sup_{n+l}\{(r_{\mathfrak{a}_1}(n)\wedge r_{\mathfrak{b}_1}(l))e^{i2\pi(\omega_{\mathfrak{a}_1}(n)\wedge \omega_{\mathfrak{b}_1}(l))}\}\}\\
&=\sup_{x=m+n+k+l}\{r_{\mathfrak{a}_0+\mathfrak{b}_0}(m+k)e^{i2\pi\omega_{\mathfrak{a}_0+\mathfrak{b}_0}(m+k)}\wedge r_{\mathfrak{a}_1+\mathfrak{b}_1}(n+l)e^{i2\pi\omega_{\mathfrak{a}_1+\mathfrak{b}_1}(n+l)}\}\\
&=r_{\mathfrak{a}_0+\mathfrak{b}_0+\mathfrak{a}_1+\mathfrak{b}_1}(x)e^{i2\pi\omega_{\mathfrak{a}_0+\mathfrak{b}_0+\mathfrak{a}_1+\mathfrak{b}_1}(x)}\\
&=\lambda_{(\mathfrak{a}+\mathfrak{b})_0+(\mathfrak{a}+\mathfrak{b})_1}(x).
\end{align*}
and
\begin{align*}
&\rho_{(A+B)}(x)=\inf_{x=a+b}\{\rho_A(a)\vee\rho_B(b)\}\\
&=\inf_{x=a+b}\{\rho_{\mathfrak{a}_0+\mathfrak{a}_1}(a)\vee\rho_{\mathfrak{b}_0+\mathfrak{b}_1}(b)\}\\
&=\inf_{x=a+b}\{\inf_{a=m+n}\{\rho_{\mathfrak{a}_0}(m)\vee\rho_{\mathfrak{a}_1}(n)\}\vee\inf_{b=k+l}\{\rho_{\mathfrak{b}_0}(k)\vee\rho_{\mathfrak{b}_1}(l)\}\}\\
&=\inf_{x=a+b}\{\inf_{a=m+n}\{\hat{r}_{\mathfrak{a}_0}(m)e^{i2\pi\hat{\omega}_{\mathfrak{a}_0}(m)}\vee \hat{r}_{\mathfrak{a}_1}(n)e^{i2\pi\hat{\omega}_{\mathfrak{a}_1}(n)}\}
\vee\inf_{b=k+l}\{\hat{r}_{\mathfrak{b}_0}(k)e^{i2\pi\hat{\omega}_{\mathfrak{b}_0}(k)}\vee \hat{r}_{\mathfrak{b}_1}(l)e^{i2\pi\hat{\omega}_{\mathfrak{b}_1}(l)}\}\}\\
&=\inf_{x=a+b}\{\inf_{a+b=m+n+k+l}\{\hat{r}_{\mathfrak{a}_0}(m)e^{i2\pi\hat{\omega}_{\mathfrak{a}_0}(m)}\vee \hat{r}_{\mathfrak{a}_1}(n)e^{i2\pi\hat{\omega}_{\mathfrak{a}_1}(n)}
\vee\hat{r}_{\mathfrak{b}_0}(k)e^{i2\pi\hat{\omega}_{\mathfrak{b}_0}(k)}\vee \hat{r}_{\mathfrak{b}_1}(l)e^{i2\pi\hat{\omega}_{\mathfrak{b}_1}(l)}\}\}\\
&=\inf_{a+b=m+n+k+l}\{\hat{r}_{\mathfrak{a}_0}(m)e^{i2\pi\hat{\omega}_{\mathfrak{a}_0}(m)}\vee \hat{r}_{\mathfrak{b}_0}(k)e^{i2\pi\hat{\omega}_{\mathfrak{b}_0}(k)}\}
\vee\inf_{a+b=m+n+k+l}\{\hat{r}_{\mathfrak{a}_1}(n)e^{i2\pi\hat{\omega}_{\mathfrak{a}_1}(n)}\vee \hat{r}_{\mathfrak{b}_1}(l)e^{i2\pi\hat{\omega}_{\mathfrak{b}_1}(l)}\}\\
&=\inf_{a+b=m+n+k+l}\{\inf_{m+k}\{(\hat{r}_{\mathfrak{a}_0}(m)\vee \hat{r}_{\mathfrak{b}_0}(k))e^{i2\pi(\hat{\omega}_{\mathfrak{a}_0}(m)\vee\hat{\omega}_{\mathfrak{b}_0}(k))}\}
\vee\inf_{n+l}\{(\hat{r}_{\mathfrak{a}_1}(n)\vee \hat{r}_{\mathfrak{b}_1}(l))e^{i2\pi(\hat{\omega}_{\mathfrak{a}_1}(n)\vee \hat{\omega}_{\mathfrak{b}_1}(l))}\}\}\\
&=\inf_{x=m+n+k+l}\{\hat{r}_{\mathfrak{a}_0+\mathfrak{b}_0}(m+k)e^{i2\pi\hat{\omega}_{\mathfrak{a}_0+\mathfrak{b}_0}(m+k)}\vee \hat{r}_{\mathfrak{a}_1+\mathfrak{b}_1}(n+l)e^{i2\pi\hat{\omega}_{\mathfrak{a}_1+\mathfrak{b}_1}(n+l)}\}\\
&=\hat{r}_{\mathfrak{a}_0+\mathfrak{b}_0+\mathfrak{a}_1+\mathfrak{b}_1}(x)e^{i2\pi\hat{\omega}_{\mathfrak{a}_0+\mathfrak{b}_0+\mathfrak{a}_1+\mathfrak{b}_1}(x)}\\
&=\rho_{(\mathfrak{a}+\mathfrak{b})_0+(\mathfrak{a}+\mathfrak{b})_1}(x).
\end{align*}
Moreover if $0\not=x\in V$ then
\begin{eqnarray*}
\lambda_{(\mathfrak{a}+\mathfrak{b})_0}(x)\wedge\lambda_{(\mathfrak{a}+\mathfrak{b})_1}(x)&=&\sup_{x=a+b}\{\lambda_{\mathfrak{a}_0}(a)\wedge\lambda_{\mathfrak{b}_0}(b)\}\wedge\sup_{x=a+b}\{\lambda_{\mathfrak{a}_1}(a)\wedge\lambda_{\mathfrak{b}_1}(b)\}\\
&=&0
\end{eqnarray*}
\begin{eqnarray*}
\rho_{(\mathfrak{a}+\mathfrak{b})_0}(x)\vee\rho_{(\mathfrak{a}+\mathfrak{b})_1}(x)&=&\inf_{x=a+b}\{\rho_{\mathfrak{a}_0}(a)\vee\rho_{\mathfrak{b}_0}(b)\}\vee\inf_{x=a+b}\{\rho_{\mathfrak{a}_1}(a)\vee\rho_{\mathfrak{b}_1}(b)\}\\
&=&1
\end{eqnarray*}
So $A+B$ is a $\mathbb{Z}_2$-CIF vector subspaces of $V$.\\
(1) Let $x,y\in V$ we need to show that $\lambda_{A+B}([x,y])\geq\lambda_{A+B}(x)\vee\lambda_{A+B}(y)$ and
$\rho_{A+B}([x,y])\leq\rho_{A+B}(x)\wedge\rho_{A+B}(y)$. Suppose that $\lambda_{A+B}([x,y])<\lambda_{A+B}(x)\vee\lambda_{A+B}(y)$, so without loss of generality we may assume that $\lambda_{A+B}([x,y])<\lambda_{A+B}(x)$. Then  $\lambda_{A+B}([x,y])=r_{A+B}([x,y])e^{i2\pi\omega_{A+B}([x,y])}<r_{A+B}(x)e^{i2\pi\omega_{A+B}(x)}$, and because $A+B$ is homogenous, then we have that $r_{A+B}([x,y])<r_{A+B}(x)$ or $\omega_{A+B}([x,y])<\omega_{A+B}(x)$. Again without loss of generality we may assume that $r_{A+B}([x,y])<r_{A+B}(x)$. Choose a number $t\in[0,1]$, such that $r_{A+B}([x,y])<t<r_{A+B}(x)$. Then there exist $a,b\in V$ with $x=a+b$, such that  $r_A(a)>t$ and $r_B(b)>t$. So
\begin{eqnarray*}
r_{A+B}([x,y])&=&\sup_{[x,y]=[a^{'},y]+[b^{'},y]}\{r_A([a^{'},y])\wedge r_B([b^{'},y])\}\\
&\geq&\sup_{[x,y]=[a^{'},y]+[b^{'},y]}\{r_A(a^{'})\wedge r_B(b^{'})\}\ (A,B\ are\ ideals)\\
&>&t>r_{A+B}([x,y]).
\end{eqnarray*}
Which is a contradiction. Also suppose that $\rho_{A+B}([x,y])>\rho_{A+B}(x)\wedge\rho_{A+B}(y)$. Then $\rho_{A+B}([x,y])>\rho_{A+B}(x)$ or $\rho_{A+B}([x,y])>\rho_{A+B}(y)$ and without loss of generality we may assume that $\rho_{A+B}([x,y])=\hat{r}_{A+B}([x,y])e^{i2\pi\hat{\omega}_{A+B}([x,y])}>\rho_{A+B}(x)=\hat{r}_{A+B}(x)e^{i2\pi\hat{\omega}_{A+B}(x)}$ because $A+B$ is homogenous, we may assume that $\hat{r}_{A+B}([x,y])>\hat{r}_{A+B}(x)$. Choose a number $t\in[0,1]$, such that $\hat{r}_{A+B}([x,y])>t>\hat{r}_{A+B}(x)$. Then there exist $a,b\in V$ with $x=a+b$, such that  $\hat{r}_A(a)<t$ and $\hat{r}_B(b)<t$. So
\begin{eqnarray*}
\hat{r}_{A+B}([x,y])&=&\inf_{[x,y]=[a^{'},y]+[b^{'},y]}\{\hat{r}_A([a^{'},y])\vee \hat{r}_B([b^{'},y])\}\\
&\leq&\inf_{[x,y]=[a^{'},y]+[b^{'},y]}\{\hat{r}_A(a^{'})\vee\hat{r}_B(b^{'})\}\ (A,B\ are\ ideals)\\
&<&t<\hat{r}_{A+B}([x,y]).
\end{eqnarray*}
Which is a contradiction. Therefore, $A+B=(\lambda_{A+B},\rho_{A+B})$ is a CIF ideal of $V$.\\
(2) Let $x,y\in V$ we need to show that $\lambda_{A+B}([x,y])\geq\lambda_{A+B}(x)\wedge\lambda_{A+B}(y)$ and
$\rho_{A+B}([x,y])\leq\rho_{A+B}(x)\vee\rho_{A+B}(y)$. Suppose that $\lambda_{A+B}([x,y])<\lambda_{A+B}(x)\wedge\lambda_{A+B}(y)$, then
$$r_{A+B}([x,y])e^{i2\pi\omega_{A+B}([x,y])}<r_{A+B}(x)e^{i2\pi\omega_{A+B}(x)}\wedge r_{A+B}(y)e^{i2\pi\omega_{A+B}(y)}$$ and so, $r_{A+B}([x,y])e^{i2\pi\omega_{A+B}([x,y])}<(r_{A+B}(x)\wedge r_{A+B}(y)) e^{i2\pi(\omega_{A+B}(x)\wedge\omega_{A+B}(y))}$, hence $$r_{A+B}([x,y])<r_{A+B}(x)\wedge r_{A+B}(y)\ \ \ {\rm or}\ \ \ \omega_{A+B}([x,y])<\omega_{A+B}(x)\wedge\omega_{A+B}(y).$$ If $r_{A+B}([x,y])<r_{A+B}(x)\wedge r_{A+B}(y)$, then $r_{A+B}([x,y])<r_{A+B}(x)$ and $r_{A+B}([x,y])<r_{A+B}(y)$. Choose a number $t\in[0,1]$, such that $r_{A+B}([x,y])<t<r_{A+B}(x)\wedge r_{A+B}(y)$. Then there exist $a,b,c,d\in V$ with $x=a+b$ and $y=c+d$, such that $r_A(a)>t$, $r_B(b)>t$, $r_A(c)>t$, $r_B(d)>t$. So
\begin{eqnarray*}
r_{A+B}([x,y])&=&\sup_{[x,y]=[a^{'},y]+[b^{'},y]}\{r_A([a^{'},y])\wedge r_B([b^{'},y])\}\\
&=&\sup_{[x,y]=[a^{'}+b^{'},c^{'}+d^{'}]}\{r_A([a^{'},c^{'}+d^{'}])\wedge r_B([b^{'},c^{'}+d^{'}])\}\\
&=&\sup_{[x,y]=[a^{'}+b^{'},c^{'}+d^{'}]}\{r_A([a^{'},c^{'}])\wedge r_A([a^{'},d^{'}])\wedge r_B([b^{'},c^{'}])\wedge r_B([b^{'},d^{'}])\}\\
&=&\sup_{x=a^{'}+b^{'},y=c^{'}+d^{'}}\{\sup_{[a^{'},c^{'}+d^{'}]}\{r_A(a^{'})\wedge r_A(c^{'})\wedge r_A(d^{'})\}\wedge\sup_{[a^{'},c^{'}+d^{'}]}\{r_A(a^{'})\wedge r_A(c^{'})\wedge r_A(d^{'})\}\}\\
&=&\sup_{x=a^{'}+b^{'}}\{r_A(a^{'})\wedge r_B(b^{'})\}\wedge\sup_{y=c^{'}+d^{'}}\{r_A(c^{'})\wedge r_A(d^{'})\}\wedge\sup_{y=c^{'}+d^{'}}\{r_B(c^{'})\wedge r_B(d^{'})\}\\
&\geq&\sup_{x=a^{'}+b^{'}}\{r_A(a^{'})\wedge r_B(b^{'})\}\wedge r_A(c)\wedge r_B(d)\\
&\geq&r_A(a)\wedge r_B(b)\wedge r_A(c)\wedge r_B(d)\\
&>&t>r_{A+B}([x,y]).
\end{eqnarray*}
Which is a contradiction. The other case can be proved similarly, so $\lambda_{A+B}([x,y])\geq\lambda_{A+B}(x)\wedge\lambda_{A+B}(y)$.  Also suppose that $\rho_{A+B}([x,y])>\rho_{A+B}(x)\vee\rho_{A+B}(y)$, then $$\hat{r}_{A+B}([x,y])e^{i2\pi\hat{\omega}_{A+B}([x,y])}>(\hat{r}_{A+B}(x)\vee\hat{r}_{A+B}(y)) e^{i2\pi(\hat{\omega}_{A+B}(x)\vee\hat{\omega}_{A+B}(y))},$$ hence $\hat{r}_{A+B}([x,y])>\hat{r}_{A+B}(x)\vee\hat{r}_{A+B}(y)$ or $\hat{\omega}_{A+B}([x,y])>\hat{\omega}_{A+B}(x)\vee\hat{\omega}_{A+B}(y)$. Since $A,B$ are homogenous without loss of generality we may assume that $\hat{r}_{A+B}([x,y])>\hat{r}_{A+B}(x)\vee\hat{r}_{A+B}(y)$. Choose a number $t\in[0,1]$, such that $\hat{r}_{A+B}([x,y])>t>\hat{r}_{A+B}(x)\vee\hat{r}_{A+B}(y)$. Then there exist $a,b,c,d\in V$ with $x=a+b$ and $y=c+d$, such that  $\hat{r}_A(a)<t$, $\hat{r}_B(b)<t$ and $\hat{r}_A(c)<t$, $\hat{r}_B(d)<t$. So
\begin{eqnarray*}
\hat{r}_{A+B}([x,y])&=&\inf_{[x,y]=[a^{'},y]+[b^{'},y]}\{\hat{r}_A([a^{'},y])\vee \hat{r}_B([b^{'},y])\}\\
&=&\inf_{[x,y]=[a^{'}+b^{'},c^{'}+d^{'}]}\{\hat{r}_A([a^{'},c^{'}+d^{'}])\vee \hat{r}_B([b^{'},c^{'}+d^{'}])\}\\
&=&\inf_{[x,y]=[a^{'}+b^{'},c^{'}+d^{'}]}\{\hat{r}_A([a^{'},c^{'}])\vee\hat{r}_A([a^{'},d^{'}])\vee \hat{r}_B([b^{'},c^{'}])\vee\hat{r}_B([b^{'},d^{'}])\}\\
&=&\inf_{x=a^{'}+b^{'},y=c^{'}+d^{'}}\{\inf_{[a^{'},c^{'}+d^{'}]}\{\hat{r}_A(a^{'})\vee \hat{r}_A(c^{'})\vee\hat{r}_A(d^{'})\}\vee\inf_{[a^{'},c^{'}+d^{'}]}\{\hat{r}_A(a^{'})\vee \hat{r}_A(c^{'})\vee\hat{r}_A(d^{'})\}\}\\
&=&\inf_{x=a^{'}+b^{'}}\{\hat{r}_A(a^{'})\vee\hat{r}_B(b^{'})\}\vee\inf_{y=c^{'}+d^{'}}\{\hat{r}_A(c^{'})\vee \hat{r}_A(d^{'})\}\vee\inf_{y=c^{'}+d^{'}}\{\hat{r}_B(c^{'})\vee\hat{r}_B(d^{'})\}\\
&\leq&\inf_{x=a^{'}+b^{'}}\{\hat{r}_A(a^{'})\vee\hat{r}_B(b^{'})\}\vee\hat{r}_A(c)\vee\hat{r}_B(d)\\
&\leq&\hat{r}_A(a)\vee\hat{r}_B(b)\vee\hat{r}_A(c)\vee\hat{r}_B(d)\\
&<&t<\hat{r}_{A+B}([x,y]).
\end{eqnarray*}
Which is a contradiction. The other case can be proved similarly, so $\rho_{A+B}([x,y])\leq\rho_{A+B}(x)\vee\rho_{A+B}(y)$. Therefore, $A+B=(\lambda_{A+B},\rho_{A+B})$ is a CIF lie subsuperalgebra of $V$.
\end{proof}
\begin{Theorem}
If $A=(\lambda_A, \rho_A)$ and $B=(\lambda_B, \rho_B)$ are $CIF$ lie sub-superalgebras (respectively CIF ideals) of $V=V_0+V_1$, then so is $A\cap B=(\lambda_{A\cap B}, \rho_{A\cap B})$
\end{Theorem}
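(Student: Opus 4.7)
The plan is to exploit the lemmas and remarks already established: by Lemma~\ref{mylemma-2}, once we know $A$ is homogenous with $B$, the pair $A\cap B=(\lambda_{A\cap B},\rho_{A\cap B})$ with $\lambda_{A\cap B}=\lambda_A\wedge\lambda_B$ and $\rho_{A\cap B}=\rho_A\vee\rho_B$ is already a CIF vector subspace of $V$. So I only need to verify two additional things: that $A\cap B$ carries a $\mathbb{Z}_2$-graded structure, and that it satisfies the appropriate bracket inequality.

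First I would construct the even and odd parts of $A\cap B$. Since $A=A_0\oplus A_1$ and $B=B_0\oplus B_1$, define $(A\cap B)_0=A_0\cap B_0$ on $V_0$ and $(A\cap B)_1=A_1\cap B_1$ on $V_1$, together with their extensions $(\mathfrak{a}\cap\mathfrak{b})_0$ and $(\mathfrak{a}\cap\mathfrak{b})_1$ to all of $V$ as in Definition~\ref{def-2}. For any $x=x_0+x_1\in V$ with $x_0\in V_0$, $x_1\in V_1$, the first Remark gives
\[
\lambda_A(x)=\lambda_{A_0}(x_0)\wedge\lambda_{A_1}(x_1),\qquad \lambda_B(x)=\lambda_{B_0}(x_0)\wedge\lambda_{B_1}(x_1),
\]
and similarly for $\rho_A,\rho_B$. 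Combining these by associativity and commutativity of $\wedge$ and $\vee$ yields
\[
\lambda_{A\cap B}(x)=\lambda_{(A\cap B)_0}(x_0)\wedge\lambda_{(A\cap B)_1}(x_1),\qquad \rho_{A\cap B}(x)=\rho_{(A\cap B)_0}(x_0)\vee\rho_{(A\cap B)_1}(x_1),
\]
so that $A\cap B=(A\cap B)_0\oplus(A\cap B)_1$, which is the required $\mathbb{Z}_2$-graded decomposition.

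Next I would verify the bracket condition. For the CIF Lie sub-superalgebra case, for any $x,y\in V$,
\begin{eqnarray*}
\lambda_{A\cap B}([x,y])&=&\lambda_A([x,y])\wedge\lambda_B([x,y])\\
&\geq&(\lambda_A(x)\wedge\lambda_A(y))\wedge(\lambda_B(x)\wedge\lambda_B(y))\\
&=&(\lambda_A(x)\wedge\lambda_B(x))\wedge(\lambda_A(y)\wedge\lambda_B(y))\\
&=&\lambda_{A\cap B}(x)\wedge\lambda_{A\cap B}(y),
\end{eqnarray*}
and dually $\rho_{A\cap B}([x,y])\leq\rho_{A\cap B}(x)\vee\rho_{A\cap B}(y)$ follows by replacing $\wedge$ with $\vee$ and reversing the inequalities. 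For the CIF ideal case I would replace each inner $\wedge$ by $\vee$ (using that $\lambda_A([x,y])\geq\lambda_A(x)\vee\lambda_A(y)$) and conclude $\lambda_{A\cap B}([x,y])\geq\lambda_{A\cap B}(x)\vee\lambda_{A\cap B}(y)$, and analogously for $\rho_{A\cap B}$.

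I do not expect any real obstacle: the bracket step is purely formal manipulation of $\wedge$/$\vee$, and the $\mathbb{Z}_2$-graded decomposition reduces to the Remark after Definition~\ref{def-2}. The only place that requires a small amount of care is the tacit assumption that $A$ is homogenous with $B$, which is needed to invoke Lemma~\ref{mylemma-2} and to ensure that the expressions $\lambda_A\wedge\lambda_B$ and $\rho_A\vee\rho_B$ are well-defined in the same polar form; otherwise the proof is a straightforward consolidation of previously established facts.
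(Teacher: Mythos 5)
Your proposal is correct and follows essentially the same route as the paper: the same decomposition $(A\cap B)_\alpha=A_\alpha\cap B_\alpha$ with its extensions, the same appeal to Lemma~\ref{mylemma-2}, and the identical $\wedge/\vee$ manipulation for the bracket inequality (the paper likewise invokes homogeneity of $A$ with $B$ to justify rearranging the componentwise minima/maxima). The only difference is presentational — you obtain the graded decomposition from the Remark's formula $\lambda_A(x)=\lambda_{A_0}(x_0)\wedge\lambda_{A_1}(x_1)$ rather than recomputing the supremum over all decompositions $x=a+b$ as the paper does — and your explicit flagging of the tacit homogeneity hypothesis is a point the paper also relies on.
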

\begin{proof}
Since $A=A_0\oplus A_1$ and $B=B_0\oplus B_1$, for $\alpha=0,1$ we can define $(A\cap B)_\alpha=(\lambda_{(A\cap B)_\alpha},\rho_{(A\cap B)_\alpha})$, where $\lambda_{(A\cap B)_\alpha}=\lambda_{A_\alpha}\cap\lambda_{B_\alpha}$ and $\rho_{(A\cap B)_\alpha}=\rho_{A_\alpha}\cap\rho_{B_\alpha}$. By Lemma~\ref{mylemma-2} we know that they are CIF subspaces of $V_\alpha$.\\
Again for $\alpha=0,1$, define $(\mathfrak{a}\cap\mathfrak{b})_\alpha=(\lambda_{(\mathfrak{a}\cap \mathfrak{b})_\alpha},\rho_{(\mathfrak{a}\cap\mathfrak{b})_\alpha})$, where $\lambda_{(\mathfrak{a}\cap\mathfrak{b})_\alpha}=\lambda_{\mathfrak{a}_\alpha}\cap\lambda_{\mathfrak{b}_\alpha}$ and $\rho_{(\mathfrak{a}\cap\mathfrak{b})_\alpha}=\rho_{\mathfrak{a}_\alpha}\cap\rho_{\mathfrak{b}_\alpha}$. Obviously, $(\mathfrak{a}\cap\mathfrak{b})_\alpha$ are extensions of $(A\cap B)_\alpha$ for $\alpha=0,1$ (respectively), and $(\mathfrak{a}\cap\mathfrak{b})_0\cap (\mathfrak{a}\cap\mathfrak{b})_1=(0,1)$ for any nonzero $x\in V$. Let $x\in V$. Because $A,B$ are homogenous. Then
\begin{eqnarray*}
(\lambda_{(\mathfrak{a}\cap\mathfrak{b})_0}&+&\lambda_{(\mathfrak{a}\cap\mathfrak{b})_1})(x)=\sup_{x=a+b}\{\lambda_{(\mathfrak{a}\cap\mathfrak{b})_0}(a)\wedge\lambda_{(\mathfrak{a}\cap\mathfrak{b})_1}(b)\}\\
&=&\sup_{x=a+b}\{r_{(\mathfrak{a}\cap\mathfrak{b})_0}(a)e^{i2\pi\omega_{(\mathfrak{a}\cap\mathfrak{b})_0}(a)}\wedge r_{(\mathfrak{a}\cap\mathfrak{b})_1}(b)e^{i2\pi\omega_{(\mathfrak{a}\cap\mathfrak{b})_1}(b)}\}\\
&=&\sup_{x=a+b}\{(r_{\mathfrak{a}_0}(a)\wedge r_{\mathfrak{b}_0}(a))e^{i2\pi(\omega_{\mathfrak{a}_0}(a)\wedge \omega_{\mathfrak{b}_0}(a))}\wedge(r_{\mathfrak{a}_1}(b)\wedge r_{\mathfrak{b}_1}(b))e^{i2\pi(\omega_{\mathfrak{a}_1}(b)\wedge \omega_{\mathfrak{b}_1}(b))}\}\\
&=&\sup_{x=a+b}\{(r_{\mathfrak{a}_0}(a)\wedge r_{\mathfrak{a}_1}(b))e^{i2\pi(\omega_{\mathfrak{a}_0}(a)\wedge \omega_{\mathfrak{a}_1}(b))}\}\wedge\sup_{x=a+b}\{(r_{\mathfrak{b}_0}(a)\wedge r_{\mathfrak{b}_1}(b))e^{i2\pi(\omega_{\mathfrak{b}_0}(a)\wedge \omega_{\mathfrak{b}_1}(b))}\}\\
&=&r_A(x)e^{i2\pi\omega_A(x)}\wedge r_B(x)e^{i2\pi\omega_B(x)}\\
&=&\lambda_A(x)\wedge\lambda_B(x)\\
&=&\lambda_{A\cap B}(x)
\end{eqnarray*}
and
\begin{eqnarray*}
(\rho_{(\mathfrak{a}\cap\mathfrak{b})_0}&+&\rho_{(\mathfrak{a}\cap\mathfrak{b})_1})(x)=\inf_{x=a+b}\{\rho_{(\mathfrak{a}\cap\mathfrak{b})_0}(a)\vee\rho_{(\mathfrak{a}\cap\mathfrak{b})_1}(b)\}\\
&=&\inf_{x=a+b}\{\hat{r}_{(\mathfrak{a}\cap\mathfrak{b})_0}(a)e^{i2\pi\hat{\omega}_{(\mathfrak{a}\cap\mathfrak{b})_0}(a)}\vee \hat{r}_{(\mathfrak{a}\cap\mathfrak{b})_1}(b)e^{i2\pi\hat{\omega}_{(\mathfrak{a}\cap\mathfrak{b})_1}(b)}\}\\
&=&\inf_{x=a+b}\{(\hat{r}_{\mathfrak{a}_0}(a)\vee\hat{r}_{\mathfrak{b}_0}(a))e^{i2\pi(\hat{\omega}_{\mathfrak{a}_0}(a)\vee \hat{\omega}_{\mathfrak{b}_0}(a))}\vee(\hat{r}_{\mathfrak{a}_1}(b)\vee \hat{r}_{\mathfrak{b}_1}(b))e^{i2\pi(\hat{\omega}_{\mathfrak{a}_1}(b)\vee\hat{\omega}_{\mathfrak{b}_1}(b))}\}\\
&=&\inf_{x=a+b}\{(\hat{r}_{\mathfrak{a}_0}(a)\vee\hat{r}_{\mathfrak{a}_1}(b))e^{i2\pi(\hat{\omega}_{\mathfrak{a}_0}(a)\vee \hat{\omega}_{\mathfrak{a}_1}(b))}\}\vee\inf_{x=a+b}\{(\hat{r}_{\mathfrak{b}_0}(a)\vee \hat{r}_{\mathfrak{b}_1}(b))e^{i2\pi(\hat{\omega}_{\mathfrak{b}_0}(a)\vee\hat{\omega}_{\mathfrak{b}_1}(b))}\}\\
&=&\hat{r}_A(x)e^{i2\pi\hat{\omega}_A(x)}\vee\hat{r}_B(x)e^{i2\pi\hat{\omega}_B(x)}\\
&=&\rho_A(x)\vee\rho_B(x)\\
&=&\rho_{A\cap B}(x)
\end{eqnarray*}
These show that $A\cap B=(\lambda_{A\cap B},\rho_{A\cap B})$ is a CIF vector subspace of $V$. Her we show that $A\cap B=(\lambda_{A\cap B},\rho_{A\cap B})$ is a CIF lie subsuperalgebra of $V$.
Let $x,y\in V$, then
\begin{eqnarray*}
\lambda_{A\cap B}([x,y])&=&\lambda_A([x,y])\wedge\lambda_B([x,y])\\
&=&r_A([x,y])e^{i2\pi\omega_A([x,y])}\wedge r_B([x,y])e^{i2\pi\omega_B([x,y])}\\
&\geq&(r_A(x)\wedge r_A(y))e^{i2\pi(\omega_A(x)\wedge\omega_A(y))}\wedge(r_B(x)\wedge r_B(y))e^{i2\pi(\omega_B(x)\wedge\omega_B(y))}\\
&=&(r_A(x)\wedge r_B(x))e^{i2\pi(\omega_A(x)\wedge\omega_B(x))}\wedge(r_A(y)\wedge r_B(y))e^{i2\pi(\omega_A(y)\wedge\omega_B(y))}\\
&=&\lambda_{A\cap B}(x)\wedge\lambda_{A\cap B}(y)\\
\end{eqnarray*}
and
\begin{eqnarray*}
\rho_{A\cap B}([x,y])&=&\rho_A([x,y])\vee\rho_B([x,y])\\
&=&\hat{r}_A([x,y])e^{i2\pi\hat{\omega}_A([x,y])}\vee\hat{r}_B([x,y])e^{i2\pi\hat{\omega}_B([x,y])}\\
&\leq&(\hat{r}_A(x)\vee\hat{r}_A(y))e^{i2\pi(\hat{\omega}_A(x)\vee\hat{\omega}_A(y))}\vee(\hat{r}_B(x)\vee \hat{r}_B(y))e^{i2\pi(\hat{\omega}_B(x)\vee\hat{\omega}_B(y))}\\
&=&(\hat{r}_A(x)\vee\hat{r}_B(x))e^{i2\pi(\hat{\omega}_A(x)\vee\hat{\omega}_B(x))}\vee(\hat{r}_A(y)\vee \hat{r}_B(y))e^{i2\pi(\hat{\omega}_A(y)\vee\hat{\omega}_B(y))}\\
&=&\rho_{A\cap B}(x)\vee\rho_{A\cap B}(y).\\
\end{eqnarray*}
Hence $A\cap B=(\lambda_{A\cap B},\rho_{A\cap B})$ is a CIF lie subsuperalgebra of $V$.
\end{proof}
\section{On lie superalgebra anti homomoiphisms}
Remark: If $\phi : V\rightarrow V'$ is a linear map between lie superalgebras such that $\phi(a_\alpha b_\beta)=(-1)^{\alpha\beta}\phi(b_\beta)\phi(a_\alpha)$ for all $a_\alpha, b_\beta\in h(V)$, $\alpha, \beta=0,1$, then $\phi$ is called an anti-homomorphism from $V$ into $V'$. In this case for any $a_\alpha, b_\beta\in h(V)$, we have that\\
\begin{eqnarray*}
\phi([a_\alpha, b_\beta])&=&\phi(a_\alpha b_\beta-(-1)^{\alpha\beta}b_\beta a_\alpha)\\
&=&\phi(a_\alpha b_\beta)-(-1)^{\alpha\beta}\phi(b_\beta a_\alpha)\\
&=&(-1)^{\alpha\beta}\phi(b_\beta)\phi(a_\alpha)-\phi(a_\alpha)\phi(b_\beta)\\
&=&-(\phi(a_\alpha)\phi(b_\beta)-(-1)^{\alpha\beta}\phi(b_\beta)\phi(a_\alpha))\\
&=&-[\phi(a_\alpha), \phi(b_\beta)].
\end{eqnarray*}
Therefore, if $x=x_0+x_1, y=y_0+y_1\in V$, then\\
\begin{eqnarray*}
\phi([x,y])&=&\phi([x_0,y_0]+[x_0,y_1]+[x_1,y_0]+[x_1,y_1])\\
&=&\phi([x_0,y_0])+\phi([x_0,y_1])+\phi([x_1,y_0])+\phi([x_1,y_1])\\
&=&-([\phi(x_0),\phi(y_0)]+[\phi(x_0),\phi(y_1)]+[\phi(x_1),\phi(y_0)]+[\phi(x_1),\phi(y_1)])\\
&=&-[\phi(x), \phi(y)].
\end{eqnarray*}
So we have the following equivalent definition of anti-homomorphism of lie superalgebras as follows:
\begin{defn}
If $\phi: V\rightarrow V'$ is a linear map between lie superalgebras $V,\ V'$ which satisfies:
\begin{eqnarray}
\phi(V_\alpha)&\subseteq& V'_\alpha ,\  (\alpha= 0,1),\\
\phi([x, y])&=& -[\phi(x), \phi(y)]
\end{eqnarray}
Then $\phi$ is called an anti-homomorphism of lie-superalgebras.
\end{defn}
\begin{defn}
Let $A=(\lambda_A,\rho_A)$  be a CIF set of $V$. Then $A=(\lambda_A,\rho_A)$ is called an anti-complex intuitionistic fuzzy (anti-CIF for short) lie sub-superalgebra of $V$, if it satisfies the following conditions:\\
(1) $A=(\lambda_A,\rho_A)$ is a $\mathbb{Z}_2$-graded CIF vector subspace of $V$\\
(2) $\lambda_A(-[x,y])\geq\lambda_A(x)\wedge \lambda_A(y)$ and
$\rho_A(-[x,y])\leq\rho_A(x)\vee\rho_A(y)$.\\ If the condition(2) is replaced by (3) $\lambda_A(-[x,y])\geq\lambda_A(x)\vee \lambda_A(y)$ and $\rho_A(-[x,y])\leq\rho_A(x)\wedge\rho_A(y)$, then $A=(\lambda_A,\rho_A)$ is called an anti-CIF ideal of $V$.
\end{defn}
\begin{prp}
Let $\phi: V\rightarrow V'$ be an anti-homomorphism of lie-superalgebras. If $A=(\lambda_A,\rho_A)$ is an anti-CIF lie sub-superalgebra (respectively an anti-CIF ideal) of $V'$, then the CIF set $\phi^{-1}(A)$ of $V$ is also an anti-CIF lie sub-superalgebra (respectively an anti-CIF ideal).
\end{prp}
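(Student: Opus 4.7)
The plan is to verify in turn the two conditions in the definition of an anti-CIF lie sub-superalgebra for $\phi^{-1}(A)$: that it is a $\mathbb{Z}_2$-graded CIF vector subspace of $V$, and that it satisfies the anti-bracket inequality. The underlying CIF vector subspace property is immediate from the earlier preimage lemma applied to the linear map $\phi$, so only the graded structure and the bracket step require genuine work.

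For the graded structure, I would exploit that an anti-homomorphism preserves the grading: $\phi(V_\alpha) \subseteq V'_\alpha$. Writing $A = A_0 \oplus A_1$ with $A_\alpha$ a CIF vector subspace of $V'_\alpha$, define $(\phi^{-1}(A))_\alpha$ on $V_\alpha$ by pulling $A_\alpha$ back along $\phi|_{V_\alpha} : V_\alpha \to V'_\alpha$, together with its extension $(\phi^{-1}(\mathfrak{a}))_\alpha$ to $V$ as in Definition~\ref{def-2}. Any $x = x_0 + x_1 \in V$ then satisfies $\phi(x_\alpha) \in V'_\alpha$, so
\[
\lambda_{\phi^{-1}(A)}(x) = \lambda_A\bigl(\phi(x_0)+\phi(x_1)\bigr) = \lambda_{A_0}(\phi(x_0)) \wedge \lambda_{A_1}(\phi(x_1)),
\]
and the analogous identity holds for $\rho$. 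Together with the intersection $(\phi^{-1}(\mathfrak{a}))_0 \cap (\phi^{-1}(\mathfrak{a}))_1 = (0,1)$ at nonzero points, inherited from $\mathfrak{a}_0 \cap \mathfrak{a}_1 = (0,1)$ and $V_0 \cap V_1 = 0$, this yields the direct-sum decomposition $\phi^{-1}(A) = (\phi^{-1}(A))_0 \oplus (\phi^{-1}(A))_1$.

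For the anti-bracket condition, the decisive chain of identities is
\[
\lambda_{\phi^{-1}(A)}(-[x,y]) = \lambda_A\bigl(\phi(-[x,y])\bigr) = \lambda_A\bigl(-\phi([x,y])\bigr) = \lambda_A\bigl([\phi(x),\phi(y)]\bigr),
\]
using linearity of $\phi$ together with $\phi([x,y]) = -[\phi(x),\phi(y)]$. Since any CIF vector subspace is invariant under negation (apply the scalar condition $\lambda_A(\alpha u) \geq \lambda_A(u)$ with $\alpha = -1$ twice to obtain $\lambda_A(-u) = \lambda_A(u)$, and similarly for $\rho_A$), the last expression equals $\lambda_A(-[\phi(x),\phi(y)])$, whence the anti-CIF hypothesis on $A$ applies and yields
\[
\lambda_A\bigl(-[\phi(x),\phi(y)]\bigr) \geq \lambda_A(\phi(x)) \wedge \lambda_A(\phi(y)) = \lambda_{\phi^{-1}(A)}(x) \wedge \lambda_{\phi^{-1}(A)}(y).
\]
The parallel argument for $\rho$ uses $\vee$ throughout, and the ideal case replaces $\wedge$ by $\vee$ for $\lambda$ (and $\vee$ by $\wedge$ for $\rho$) on the right, mirroring the anti-CIF ideal hypothesis on $A$ exactly.

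I expect the main obstacle to be the bookkeeping that verifies the $\mathbb{Z}_2$-graded structure of $\phi^{-1}(A)$, in particular tracking the extensions $\mathfrak{a}_\alpha$ and confirming the direct-sum intersection property inside $V$; once that is in place, the anti-bracket step is a short symbolic computation whose only subtle ingredient is the sign invariance of CIF vector subspaces.
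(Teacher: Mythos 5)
Your proposal is correct and follows essentially the same route as the paper: pull back the graded components $A_\alpha$ along $\phi$, verify the direct-sum decomposition of $\phi^{-1}(A)$ using $\phi(V_\alpha)\subseteq V'_\alpha$, and then reduce the anti-bracket inequality to the hypothesis on $A$ via $\phi([x,y])=-[\phi(x),\phi(y)]$. The only difference is that you explicitly justify the passage from $\lambda_A([\phi(x),\phi(y)])$ to $\lambda_A(-[\phi(x),\phi(y)])$ by the sign-invariance $\lambda_A(-u)=\lambda_A(u)$ of CIF vector subspaces, a step the paper uses implicitly; this is a welcome clarification rather than a divergence.
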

\begin{proof}
Let $x=x_0+x_1\in V$, then $\phi(x)=\phi(x_0)+\phi(x_1)\in V'$. Define $\phi^{-1}(A)_\alpha=(\lambda_{\phi^{-1}(A)_\alpha},\rho_{\phi^{-1}(A)_\alpha})$, where $\lambda_{\phi^{-1}(A)_\alpha}=\phi^{-1}(\lambda_{A_\alpha})$ and $\rho_{\phi^{-1}(A)_\alpha}=\phi^{-1}(\rho_{A_\alpha})$, $\alpha=0,1$. By Lemma 2.3 we have that they are CIF subspaces of $V_\alpha$, $\alpha=0,1$ (respectively)
Now define $\phi^{-1}(\mathfrak{a})_\alpha=(\lambda_{\phi^{-1}(\mathfrak{a})_\alpha},\rho_{\phi^{-1}(\mathfrak{a})_\alpha})$, where $\lambda_{\phi^{-1}(\mathfrak{a})_\alpha}=\phi^{-1}(\lambda_{\mathfrak{a}_\alpha})$ and $\rho_{\phi^{-1}(\mathfrak{a})_\alpha}=\phi^{-1}(\rho_{\mathfrak{a}_\alpha})$, $\alpha=0,1$. Clearly, $$\lambda_{\phi^{-1}(\mathfrak{a})_\alpha}(x)=\begin{cases} \lambda_{\phi^{-1}(A)_\alpha}(x) & :\quad x\in V_\alpha\\
0 & :\quad x\not\in V_\alpha\end{cases}\ \ and\ \
\rho_{\phi^{-1}(\mathfrak{a})_\alpha}(x)=\begin{cases} \rho_{\phi^{-1}(A)_\alpha}(x) & :\quad x\in V_\alpha\\
1 & :\quad x\not\in V_\alpha\end{cases}$$ for $\alpha=0,1$. These show that $\phi^{-1}(\mathfrak{a})_\alpha$ ($\alpha=0,1$) are CIF vector subspace of $V$. Moreover $\lambda_{\phi^{-1}(\mathfrak{a})_0}(x)\wedge \lambda_{\phi^{-1}(\mathfrak{a})_1}(x)=\phi^{-1}(\lambda_{\mathfrak{a}_0})(x)\wedge\phi^{-1}(\lambda_{\mathfrak{a}_1})(x)=\lambda_{\mathfrak{a}_0}(\phi(x))\wedge\lambda_{\mathfrak{a}_1}(\phi(x))=0$ for $0\not=x\in V$, and similarly, $\rho_{\phi^{-1}(\mathfrak{a})_0}(x)\vee \rho_{\phi^{-1}(\mathfrak{a})_1}(x)=1$ for $0\not=x\in V$.\\
Let $0\not=x\in V$. Then
\begin{eqnarray*}
\lambda_{\phi^{-1}(\mathfrak{a})_0+\phi^{-1}(\mathfrak{a})_1}(x)&=&\sup_{x=a+b}\{\lambda_{\phi^{-1}(\mathfrak{a})_0}(a)\wedge\lambda_{\phi^{-1}(\mathfrak{a})_1}(b)\}\\
&=&\sup_{x=a+b}\{\lambda_{\mathfrak{a}_0}(\phi(a))\wedge\lambda_{\mathfrak{a}_1}(\phi(b))\},\ (a=a_0+a_1,\ b=b_0+b_1)\\
&=&\sup_{x=a+b}\{\lambda_{\mathfrak{a}_0}(\phi(a_0))\wedge\lambda_{\mathfrak{a}_1}(\phi(b_1))\}\\
&=&\sup_{x=a+b}\{r_{\mathfrak{a}_0}(\phi(a_0))e^{i2\pi\omega_{\mathfrak{a}_0}(\phi(a_0))}\wedge r_{\mathfrak{a}_1}(\phi(b_1))e^{i2\pi\omega_{\mathfrak{a}_1}(\phi(b_1))}\}\\
&=&r_{\mathfrak{a}_0}(\phi(x_0))e^{i2\pi\omega_{\mathfrak{a}_0}(\phi(x_0))}\wedge r_{\mathfrak{a}_1}(\phi(x_1))e^{i2\pi\omega_{\mathfrak{a}_1}(\phi(x_1))}\\
&=&\lambda_{\mathfrak{a}_0}(\phi(x_0))\wedge\lambda_{\mathfrak{a}_1}(\phi(x_1))\\
&=&\lambda_{\mathfrak{a}_0+\mathfrak{a}_1}(\phi(x)),\ (x=x_0+x_1)\\
&=&\lambda_A(\phi(x))\\
&=&\lambda_{\phi^{-1}(A)}(x).
\end{eqnarray*}
and
\begin{eqnarray*}
\rho_{\phi^{-1}(\mathfrak{a})_0+\phi^{-1}(\mathfrak{a})_1}(x)&=&\inf_{x=a+b}\{\rho_{\phi^{-1}(\mathfrak{a})_0}(a)\vee\rho_{\phi^{-1}(\mathfrak{a})_1}(b)\}\\
&=&\inf_{x=a+b}\{\rho_{\mathfrak{a}_0}(\phi(a))\vee\rho_{\mathfrak{a}_1}(\phi(b))\},\ (a=a_0+a_1,\ b=b_0+b_1)\\
&=&\inf_{x=a+b}\{\rho_{\mathfrak{a}_0}(\phi(a_0))\vee\rho_{\mathfrak{a}_1}(\phi(b_1))\}\\
&=&\inf_{x=a+b}\{\hat{r}_{\mathfrak{a}_0}(\phi(a_0))e^{i2\pi\hat{\omega}_{\mathfrak{a}_0}(\phi(a_0))}\vee \hat{r}_{\mathfrak{a}_1}(\phi(b_1))e^{i2\pi\hat{\omega}_{\mathfrak{a}_1}(\phi(b_1))}\}\\
&=&\hat{r}_{\mathfrak{a}_0}(\phi(x_0))e^{i2\pi\hat{\omega}_{\mathfrak{a}_0}(\phi(x_0))}\vee \hat{r}_{\mathfrak{a}_1}(\phi(x_1))e^{i2\pi\hat{\omega}_{\mathfrak{a}_1}(\phi(x_1))}\\
&=&\rho_{\mathfrak{a}_0}(\phi(x_0))\vee\rho_{\mathfrak{a}_1}(\phi(x_1))\\
&=&\rho_{\mathfrak{a}_0+\mathfrak{a}_1}(\phi(x)),\ (x=x_0+x_1)\\
&=&\rho_A(\phi(x))\\
&=&\rho_{\phi^{-1}(A)}(x).
\end{eqnarray*}
So, $\phi^{-1}(A)=\phi^{-1}(A)_0\oplus\phi^{-1}(A)_1$ is a $\mathbb{Z}_2$-graded CIF vector subspace of $V$.\\ Let $x,y\in V$. Then
\begin{eqnarray*}
\lambda_{\phi^{-1}(A)}(-[x,y])=\lambda_A(-\phi([x,y]))
&=&\lambda_A([\phi(x),\phi(y)])\\
&\geq&\lambda_A(\phi(x))\wedge\lambda_A(\phi(y))\\
&=&\lambda_{\phi^{-1}(A)}(x)\wedge\lambda_{\phi^{-1}(A)}(y)
\end{eqnarray*}
and
\begin{eqnarray*}
\rho_{\phi^{-1}(A)}(-[x,y])=\rho_A(-\phi([x,y]))
&=&\rho_A([\phi(x),\phi(y)])\\
&\leq&\rho_A(\phi(x))\vee\rho_A(\phi(y))\\
&=&\rho_{\phi^{-1}(A)}(x)\vee\rho_{\phi^{-1}(A)}(y).
\end{eqnarray*}
Thus, $\phi^{-1}(A)$ is an anti-CIF Lie subsuperalgebra of $V$.\\
Also if $x,y\in V$. Then
\begin{eqnarray*}
\lambda_{\phi^{-1}(A)}(-[x,y])=\lambda_A(-\phi([x,y]))
&=&\lambda_A([\phi(x),\phi(y)])\\
&\geq&\lambda_A(\phi(x))\vee\lambda_A(\phi(y))\\
&=&\lambda_{\phi^{-1}(A)}(x)\vee\lambda_{\phi^{-1}(A)}(y)
\end{eqnarray*}
and
\begin{eqnarray*}
\rho_{\phi^{-1}(A)}(-[x,y])=\rho_A(-\phi([x,y]))
&=&\rho_A([\phi(x),\phi(y)])\\
&\leq&\rho_A(\phi(x))\wedge\rho_A(\phi(y))\\
&=&\rho_{\phi^{-1}(A)}(x)\wedge\rho_{\phi^{-1}(A)}(y).
\end{eqnarray*}
Thus, $\phi^{-1}(A)$ is an anti-CIF ideal of $V$.
\end{proof}
\begin{prp}\label{sec4-1}
Let $\phi: V\rightarrow V'$ be a surjective anti-homomorphism of lie-superalgebras. If $A=(\lambda_A,\rho_A)$ is an anti-CIF lie sub-superalgebra (respectively an anti-CIF ideal) of $V$, then the CIF set $\phi(A)$ of $V'$ is also an anti-CIF lie sub-superalgebra (respectively an anti-CIF ideal).
\end{prp}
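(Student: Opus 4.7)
The plan is to mirror the proof of the preceding Proposition on preimages, but with image structures, exploiting surjectivity of $\phi$ wherever a preimage is required. The overall target is to verify that $\phi(A)$ satisfies the two conditions of an anti-CIF Lie sub-superalgebra: that it is a $\mathbb{Z}_2$-graded CIF vector subspace of $V'$, and that it respects the bracket via $\lambda_{\phi(A)}(-[x',y']) \geq \lambda_{\phi(A)}(x') \wedge \lambda_{\phi(A)}(y')$ with the dual inequality for $\rho_{\phi(A)}$.

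First I would establish the graded structure. For $\alpha = 0, 1$, set $\phi(A)_\alpha = (\lambda_{\phi(A)_\alpha}, \rho_{\phi(A)_\alpha})$ on $V'_\alpha$ as the image of $A_\alpha$, using that $\phi(V_\alpha) = V'_\alpha$ (which follows from surjectivity combined with $\phi(V_\alpha) \subseteq V'_\alpha$: given $y' \in V'_\alpha$, write $y' = \phi(x_0 + x_1) = \phi(x_0) + \phi(x_1)$ and invoke uniqueness of the graded decomposition to force $\phi(x_{1-\alpha}) = 0$). Extend to $\phi(\mathfrak{a})_\alpha$ on all of $V'$ by the zero/one convention of Definition~\ref{def-2}. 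The identification $\phi(A) = \phi(A)_0 \oplus \phi(A)_1$ would then be obtained by expanding $\lambda_{\phi(A)}(y') = \sup\{\lambda_A(z) : \phi(z) = y'\}$, decomposing each such $z = z_0 + z_1$, and re-grouping the supremum over decompositions $y' = a + b$ with $a \in V'_0$, $b \in V'_1$; the dual computation with $\inf$ and $\vee$ handles $\rho_{\phi(A)}$. These steps are essentially parallel to the graded computation carried out in the preimage case.

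Next, for the bracket condition, fix $x', y' \in V'$ and choose preimages $x, y \in V$ with $\phi(x) = x'$ and $\phi(y) = y'$, available by surjectivity. The anti-homomorphism identity gives $\phi([x, y]) = -[\phi(x), \phi(y)] = -[x', y']$, so $[x, y] \in \phi^{-1}(-[x', y'])$. Because $A$ is a CIF vector subspace, the scalar condition applied with $\alpha = -1$ in both directions yields $\lambda_A(-u) = \lambda_A(u)$ and $\rho_A(-u) = \rho_A(u)$ for every $u \in V$, so the anti-CIF hypothesis $\lambda_A(-[x,y]) \geq \lambda_A(x) \wedge \lambda_A(y)$ reads equivalently as $\lambda_A([x, y]) \geq \lambda_A(x) \wedge \lambda_A(y)$. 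Hence
\[
\lambda_{\phi(A)}(-[x', y']) \;=\; \sup_{z \in \phi^{-1}(-[x', y'])} \lambda_A(z) \;\geq\; \lambda_A([x, y]) \;\geq\; \lambda_A(x) \wedge \lambda_A(y),
\]
and taking the supremum over $x \in \phi^{-1}(x')$ and $y \in \phi^{-1}(y')$ yields $\lambda_{\phi(A)}(-[x', y']) \geq \lambda_{\phi(A)}(x') \wedge \lambda_{\phi(A)}(y')$. The analogous computation with $\inf$ and $\vee$ handles $\rho_{\phi(A)}$, and the anti-CIF ideal case is formally identical with $\wedge$ replaced by $\vee$ (and vice versa) throughout.

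The main obstacle will be the graded step, specifically the clean justification that the supremum over preimages of $y'$ agrees with the supremum over decompositions $y' = a + b$ with $a \in V'_0$, $b \in V'_1$; this needs $\phi(V_\alpha) = V'_\alpha$ together with the homogeneity of $A$ so that moduli and arguments of the complex functions transport correctly under the image operator. Once this direct-sum identification is in place, the bracket verification above is essentially routine.
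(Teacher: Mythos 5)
Your proposal is correct, and its first half (the $\mathbb{Z}_2$-graded structure of $\phi(A)$) follows essentially the same route as the paper: define $\phi(A)_\alpha$ as images of $A_\alpha$, extend by the zero/one convention, and identify $\lambda_{\phi(A)}$ with $\lambda_{\phi(\mathfrak{a})_0+\phi(\mathfrak{a})_1}$ by interchanging the supremum over decompositions $y'=a+b$ with the supremum over preimages $y'=\phi(x)$, $x=m+n$. You add a small detail the paper leaves tacit, namely that surjectivity forces $\phi(V_\alpha)=V'_\alpha$. Where you genuinely diverge is the bracket condition: the paper argues by contradiction, assuming $\lambda_{\phi(A)}(-[x,y])<\lambda_{\phi(A)}(x)\wedge\lambda_{\phi(A)}(y)$, using homogeneity to reduce to a strict inequality of moduli or of arguments, inserting a threshold $t$ strictly in between, and producing preimages $a,b$ with $r_A(a),r_A(b)>t$ to contradict $r_{\phi(A)}(-[x,y])\geq r_A([a,b])\geq r_A(a)\wedge r_A(b)$. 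You instead argue directly: since $[x,y]\in\phi^{-1}(-[x',y'])$ one has $\lambda_{\phi(A)}(-[x',y'])\geq\lambda_A([x,y])=\lambda_A(-[x,y])\geq\lambda_A(x)\wedge\lambda_A(y)$, and then you take suprema over the two preimage sets independently, using the lattice identity $\sup_{x,y}\{f(x)\wedge g(y)\}=\sup_x\{f(x)\}\wedge\sup_y\{g(y)\}$ (applied componentwise to moduli and arguments, which is where homogeneity enters). Your route is shorter and avoids the case split between $r$ and $\omega$; you also make explicit the identity $\lambda_A(-u)=\lambda_A(u)$ obtained from the scalar condition with $\alpha=-1$, which the paper uses silently when it writes $r_A([a,b])\geq r_A(a)\wedge r_A(b)$. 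Both arguments are sound and rest on the same homogeneity hypothesis; the paper's contradiction scheme has the mild advantage of matching the technique it reuses in the ideal case and in the earlier theorem on $A+B$, while yours isolates the one lattice fact actually needed.
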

\begin{proof}
It is easy to see that $A=A_0\oplus A_1$ where
$A_0=(\lambda_{A_0},\rho_{A_0})$, $A_1=(\lambda_{A_1},\rho_{A_1})$
are CIF vector subspaces of $V_0, V_1$ (respectively) since
$A=(\lambda_A,\rho_A)$ is a CIF lie sub-superalgebra of $V$. Define
$\phi(A)_\alpha=(\lambda_{\phi(A)_\alpha},\rho_{\phi(A)_\alpha})$,
where $\lambda_{\phi(A)_\alpha}=\phi(\lambda_{A_\alpha})$,
$\rho_{\phi(A)_\alpha}=\phi(\rho_{A_\alpha})$ for $\alpha=0,1$. By
Lemma 2.4, $\phi(A)_\alpha$ is an anti-CIF subspace of $V_\alpha$,
$(\alpha=0,1)$. Extend them to $\phi(A)^{'}_\alpha$ $(\alpha=0,1)$
(respectively), we define
$\phi(\mathfrak{a})_\alpha=(\lambda_{\phi(\mathfrak{a})_\alpha},\rho_{\phi(\mathfrak{a})_\alpha})$
where
$\lambda_{\phi(\mathfrak{a})_\alpha}=\phi(\lambda_{\mathfrak{a}_\alpha})$,
$\rho_{\phi(\mathfrak{a})_\alpha}=\phi(\rho_{\mathfrak{a}_\alpha})$
for $(\alpha=0,1)$ (respectively). Clearly
$$\lambda_{\phi(\mathfrak{a})_\alpha}(x)=\begin{cases} \lambda_{\phi(A)_\alpha}(x) & :\quad x\in V'_\alpha\\
0 & :\quad x\not\in V'_\alpha\end{cases}\ \ and\ \
\rho_{\phi(\mathfrak{a})_\alpha}(x)=\begin{cases} \rho_{\phi(A)_\alpha}(x) & :\quad x\in V'_\alpha\\
1 & :\quad x\not\in V'_\alpha\end{cases}$$ for $\alpha=0,1$.\\
If $0\not=x\in V^{'}$, then
\begin{eqnarray*}
\lambda_{\phi(\mathfrak{a})_0}(x)\wedge\lambda_{\phi(\mathfrak{a})_1}(x)&=&\phi(\lambda_{\mathfrak{a}_0})(x)\wedge\phi(\lambda_{\mathfrak{a}_1})(x)\\
&=&\sup_{x=\phi(a)}\{\lambda_{\mathfrak{a}_0}(a)\}\wedge\sup_{x=\phi(a)}\{\lambda_{\mathfrak{a}_1(a)}\}\\
&=&\sup_{x=\phi(a)}\{\lambda_{\mathfrak{a}_0}(a)\wedge\lambda_{\mathfrak{a}_1(a)}\}=0\\
\end{eqnarray*}
and
\begin{eqnarray*}
\rho_{\phi(\mathfrak{a})_0}(x)\vee\rho_{\phi(\mathfrak{a})_1}(x)&=&\phi(\rho_{\mathfrak{a}_0})(x)\vee\phi(\rho_{\mathfrak{a}_1})(x)\\
&=&\inf_{x=\phi(a)}\{\rho_{\mathfrak{a}_0}(a)\}\vee\inf_{x=\phi(a)}\{\rho_{\mathfrak{a}_1(a)}\}\\
&=&\inf_{x=\phi(a)}\{\rho_{\mathfrak{a}_0}(a)\vee\rho_{\mathfrak{a}_1(a)}\}=1.\\
\end{eqnarray*}
Let $0\not=y\in V^{'}$. Then
\begin{eqnarray*}
\lambda_{\phi(\mathfrak{a})_0+\phi(\mathfrak{a})_1}(y)&=&\sup_{y=a+b}\{\lambda_{\phi(\mathfrak{a})_0}(a)\wedge\lambda_{\phi(\mathfrak{a})_1(b)}\}\\
&=&\sup_{y=a+b}\{\phi(\lambda_{\mathfrak{a}_0})(a)\wedge\phi(\lambda_{\mathfrak{a}_1})(b)\}\\
&=&\sup_{y=a+b}\{\sup_{a=\phi(m)}\{\lambda_{\mathfrak{a}_0}(m)\}\wedge\sup_{b=\phi(n)}\{\lambda_{\mathfrak{a}_1}(n)\}\}\\
&=&\sup_{y=\phi(x)}\{\sup_{x=m+n}\{\lambda_{\mathfrak{a}_0}(m)\wedge\lambda_{\mathfrak{a}_1}(n)\}\}\\
&=&\sup_{y=\phi(x)}\{\sup_{x=m+n}\{r_{\mathfrak{a}_0}(m)e^{i2\pi\omega_{\mathfrak{a}_0}(m)}\wedge r_{\mathfrak{a}_1}(n)e^{i2\pi\omega_{\mathfrak{a}_1}(n)}\}\}\\
&=&\sup_{y=\phi(x)}\{\sup_{x=m+n}\{r_{\mathfrak{a}_0}(m)\wedge r_{\mathfrak{a}_1}(n)\}e^{i2\pi\sup\limits_{x=m+n}\{\omega_{\mathfrak{a}_0}(m)\wedge \omega_{\mathfrak{a}_1}(n)\}}\}\\
&=&\sup_{y=\phi(x)}\{r_{\mathfrak{a}_0+\mathfrak{a}_1}(x)e^{i2\pi\omega_{\mathfrak{a}_0+\mathfrak{a}_1}(x)}\}\\
&=&\sup_{y=\phi(x)}\{\lambda_{\mathfrak{a}_0+\mathfrak{a}_1}(x)\}=\sup_{y=\phi(x)}\{\lambda_A(x)\}=\lambda_{\phi(A)}(y).
\end{eqnarray*}
and
\begin{eqnarray*}
\rho_{\phi(\mathfrak{a})_0+\phi(\mathfrak{a})_1}(y)&=&\inf_{y=a+b}\{\rho_{\phi(\mathfrak{a})_0}(a)\vee\rho_{\phi(\mathfrak{a})_1(b)}\}\\
&=&\inf_{y=a+b}\{\phi(\rho_{\mathfrak{a}_0})(a)\vee\phi(\rho_{\mathfrak{a}_1})(b)\}\\
&=&\inf_{y=a+b}\{\inf_{a=\phi(m)}\{\rho_{\mathfrak{a}_0}(m)\}\vee\inf_{b=\phi(n)}\{\rho_{\mathfrak{a}_1}(n)\}\}\\
&=&\inf_{y=\phi(x)}\{\inf_{x=m+n}\{\rho_{\mathfrak{a}_0}(m)\vee\rho_{\mathfrak{a}_1}(n)\}\}\\
&=&\inf_{y=\phi(x)}\{\inf_{x=m+n}\{\hat{r}_{\mathfrak{a}_0}(m)e^{i2\pi\hat{\omega}_{\mathfrak{a}_0}(m)}\vee \hat{r}_{\mathfrak{a}_1}(n)e^{i2\pi\hat{\omega}_{\mathfrak{a}_1}(n)}\}\}\\
&=&\inf_{y=\phi(x)}\{\inf_{x=m+n}\{\hat{r}_{\mathfrak{a}_0}(m)\vee \hat{r}_{\mathfrak{a}_1}(n)\}e^{i2\pi\inf\limits_{x=m+n}\{\hat{\omega}_{\mathfrak{a}_0}(m)\vee \hat{\omega}_{\mathfrak{a}_1}(n)\}}\}\\
&=&\inf_{y=\phi(x)}\{\hat{r}_{\mathfrak{a}_0+\mathfrak{a}_1}(x)e^{i2\pi\hat{\omega}_{\mathfrak{a}_0+\mathfrak{a}_1}(x)}\}\\
&=&\inf_{y=\phi(x)}\{\rho_{\mathfrak{a}_0+\mathfrak{a}_1}(x)\}=\inf_{y=\phi(x)}\{\rho_A(x)\}=\rho_{\phi(A)}(y).
\end{eqnarray*}
So $\phi(A)=\phi(A)_0\oplus\phi(A)_1$ is a $\mathbb{Z}_2$ graded CIF vector subspace of $V^{'}$.\\
Let $x,y\in V^{'}$. We need to show that $\lambda_{\phi(A)}(-[x,y])\geq\lambda_{\phi(A)}(x)\wedge\lambda_{\phi(A)}(y)$ and $\rho_{\phi(A)}(-[x,y])\leq\rho_{\phi(A)}(x)\vee\rho_{\phi(A)}(y)$. Suppose that $\lambda_{\phi(A)}(-[x,y])<\lambda_{\phi(A)}(x)\wedge\lambda_{\phi(A)}(y)=r_{\phi(A)}(x)e^{i2\pi\omega_{\phi(A)}(x)}\wedge r_{\phi(A)}(y)e^{i2\pi\omega_{\phi(A)}(y)}$. Then $\lambda_{\phi(A)}(-[x,y])=r_{\phi(A)}(-[x,y])e^{i2\pi\omega_{\phi(A)}(-[x,y])}<\{r_{\phi(A)}(x)\wedge r_{\phi(A)}(y)\}e^{i2\pi\{\omega_{\phi(A)}(x)\wedge \omega_{\phi(A)}(y)\}}$, since $\phi(A)$ is homogeneous. Thus, $r_{\phi(A)}(-[x,y])<r_{\phi(A)}(x)\wedge r_{\phi(A)}(y)$ or $\omega_{\phi(A)}(-[x,y])<\omega_{\phi(A)}(x)\wedge\omega_{\phi(A)}(y)$. If $r_{\phi(A)}(-[x,y])<r_{\phi(A)}(x)\wedge r_{\phi(A)}(y)$, then $r_{\phi(A)}(-[x,y])<r_{\phi(A)}(x)$ and $r_{\phi(A)}(-[x,y])<r_{\phi(A)}(y)$. Let $t\in[0,1]$ such that $r_{\phi(A)}(-[x,y])<t<r_{\phi(A)}(x)$ and $r_{\phi(A)}(-[x,y])<t<r_{\phi(A)}(y)$. Then there exist $a\in\phi^{-1}(x)$ and $b\in\phi^{-1}(y)$ such that $r_A(a)>t$, $r_A(b)>t$. Since $\phi([a,b])=-[x,y]$, we have that
\begin{eqnarray*}
r_{\phi(A)}(-[x,y])=\sup_{-[x,y]=\phi([a^{'},b^{'}])}\{r_A([a^{'},b^{'}])\}&\geq& r_A([a,b])\\
&\geq& r_A(a)\wedge r_A(b)\\
&>& t> r_{\phi(A)}(-[x,y])
\end{eqnarray*}
which is a contradiction. Similarly for the case $\omega_{\phi(A)}(-[x,y])<\omega_{\phi(A)}(x)\wedge \omega_{\phi(A)}(y)$. Therefore, $\phi(A)$ is an anti-CIF lie sub-superalgebra of $V^{'}$.\\
Now to show that $\phi(A)$ is an anti-CIF ideal of $V^{'}$, we need to prove that for any $x,y\in V^{'}$, then $\lambda_{\phi(A)}(-[x,y])\geq\lambda_{\phi(A)}(x)\vee\lambda_{\phi(A)}(y)$ and $\rho_{\phi(A)}(-[x,y])\leq\rho_{\phi(A)}(x)\wedge\rho_{\phi(A)}(y)$. Suppose that $\lambda_{\phi(A)}(-[x,y])<\lambda_{\phi(A)}(x)\vee\lambda_{\phi(A)}(y)=r_{\phi(A)}(x)e^{i2\pi\omega_{\phi(A)}(x)}\vee r_{\phi(A)}(y)e^{i2\pi\omega_{\phi(A)}(y)}$. Then, since $\phi(A)$ is homogeneous,
$\lambda_{\phi(A)}(-[x,y])=r_{\phi(A)}(-[x,y])e^{i2\pi\omega_{\phi(A)}(-[x,y])}<\{r_{\phi(A)}(x)\vee r_{\phi(A)}(y)\}e^{i2\pi\{\omega_{\phi(A)}(x)\vee\omega_{\phi(A)}(y)\}}$. Which implies that $r_{\phi(A)}(-[x,y])$ $<r_{\phi(A)}(x)\vee r_{\phi(A)}(y)$ or $\omega_{\phi(A)}(-[x,y])<\omega_{\phi(A)}(x)\vee\omega_{\phi(A)}(y)$. If $r_{\phi(A)}(-[x,y])<r_{\phi(A)}(x)\vee r_{\phi(A)}(y)$, then $r_{\phi(A)}(-[x,y])<r_{\phi(A)}(x)$ or $r_{\phi(A)}(-[x,y])<r_{\phi(A)}(y)$. Suppose that $r_{\phi(A)}(-[x,y])<r_{\phi(A)}(x)$, then choose $t\in[0,1]$ such that $r_{\phi(A)}(-[x,y])<t<r_{\phi(A)}(x)$, so there exists $a\in\phi^{-1}(x)$ such that $r_A(a)>t$ and since $\phi$ is onto there exists $b\in\phi^{-1}(y)$. Now, since $\phi([a,b])=-[\phi(a),\phi(b)]=-[x,y]$ then we have that
\begin{eqnarray*}
r_{\phi(A)}(-[x,y])&=&\sup_{-[x,y]=\phi([a^{'},b^{'}])}\{r_A([a^{'},b^{'}])\}\\
&=&\sup_{-[x,y]=\phi([a^{'},b^{'}])}\{r_A(a^{'})\vee r_A(b^{'})\}\\
&\geq& r_A(a)\vee r_A(b)\\
&>& t> r_{\phi(A)}(-[x,y])
\end{eqnarray*}
which is a contradiction. The other case can be proved similarly. Also, suppose that $\rho_{\phi(A)}(-[x,y])>\rho_{\phi(A)}(x)\wedge\rho_{\phi(A)}(y)$. Then $\hat{r}_{\phi(A)}(-[x,y])e^{i2\pi\hat{\omega}_{\phi(A)}(-[x,y])}>\{\hat{r}_{\phi(A)}(x)\wedge \hat{r}_{\phi(A)}(y)\}e^{i2\pi\{\hat{\omega}_{\phi(A)}(x)\wedge\hat{\omega}_{\phi(A)}(y)\}}$, since $\phi(A)$ is homogenous we have that $\hat{r}_{\phi(A)}(-[x,y])>\hat{r}_{\phi(A)}(x)\wedge \hat{r}_{\phi(A)}(y)$ or $\hat{\omega}_{\phi(A)}(-[x,y])>\hat{\omega}_{\phi(A)}(x)\wedge\hat{\omega}_{\phi(A)}(y)$. If $\hat{r}_{\phi(A)}(-[x,y])>\hat{r}_{\phi(A)}(x)\wedge \hat{r}_{\phi(A)}(y)$, then $\hat{r}_{\phi(A)}(-[x,y])>\hat{r}_{\phi(A)}(x)$ or $\hat{r}_{\phi(A)}(-[x,y])>\hat{r}_{\phi(A)}(y)$. Without loss of generality we may assume that $\hat{r}_{\phi(A)}(-[x,y])>\hat{r}_{\phi(A)}(x)$, choose $t\in[0,1]$ such that $\hat{r}_{\phi(A)}(-[x,y])>t>\hat{r}_{\phi(A)}(x)$, then there exists $a\in\phi^{-1}(x)$ such that $\hat{r}_A(a)<t$, because $\phi$ is onto map let $b\in\phi^{-1}(y)$. Since $\phi([a,b])=-[x,y]$, we have that
\begin{eqnarray*}
\hat{r}_{\phi(A)}(-[x,y])&=&\inf_{-[x,y]=\phi([a^{'},b^{'}])}\{\hat{r}_A([a^{'},b^{'}])\}\\
&=&\inf_{-[x,y]=\phi([a^{'},b^{'}])}\{\hat{r}_A(a^{'})\wedge \hat{r}_A(b^{'})\}\\
&\leq& \hat{r}_A(a)\wedge \hat{r}_A(b)\\
&<& t< \hat{r}_{\phi(A)}(-[x,y])
\end{eqnarray*}
which is a contradiction. The other case $\hat{\omega}_{\phi(A)}(-[x,y])>\hat{\omega}_{\phi(A)}(x)\wedge\hat{\omega}_{\phi(A)}(y)$ can be proved similarly. Therefore, $\phi(A)$ is an anti-CIF ideal of $V^{'}$.
\end{proof}
\begin{Theorem}
Let $\phi: V\rightarrow V'$ be a surjective anti-homomorphism of lie-superalgebras. If $A=(\lambda_A,\rho_A)$ and  $B=(\lambda_B,\rho_B)$ are anti-CIF ideals of $V$, then the CIF set $\phi(A+B)=\phi(A)+\phi(B)$ of $V'$ is also an anti-CIF ideal.
\end{Theorem}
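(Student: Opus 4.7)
The plan is to reduce the theorem to two previously established kinds of results: a sum-of-ideals theorem (an anti-analog of the sum theorem already proved for CIF ideals in Section 3), and Proposition~\ref{sec4-1} about images of anti-CIF ideals under a surjective anti-homomorphism. First I would show that $A+B=(\lambda_{A+B},\rho_{A+B})$ is itself an anti-CIF ideal of $V$. By Lemma~\ref{mylemma-1}, $A+B$ is already a CIF vector subspace; the $\mathbb{Z}_2$-graded decomposition argument for the sum, already spelled out in the CIF-ideal version, goes through verbatim in the anti-case because only the algebraic identity for the bracket matters, and $-[x,y]$ satisfies bilinearity in exactly the same way. The two anti-bracket inequalities for $\lambda_{A+B}$ and $\rho_{A+B}$ are obtained by the same sup/inf over decompositions $[x,y]=[a'+b',y]$ together with $A,B$ being ideals, and replacing $[x,y]$ by $-[x,y]$ nowhere affects the sup/inf estimates.

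Next I would apply Proposition~\ref{sec4-1} to the anti-CIF ideal $A+B$ to conclude that $\phi(A+B)$ is an anti-CIF ideal of $V'$. At this point the only remaining content of the theorem is the pointwise identity
\[
\phi(A+B)=\phi(A)+\phi(B)
\]
of CIF sets of $V'$. For any $y\in V'$, I would compute
\[
\lambda_{\phi(A+B)}(y)=\sup_{y=\phi(x)}\lambda_{A+B}(x)=\sup_{y=\phi(x)}\ \sup_{x=a+b}\bigl(\lambda_A(a)\wedge\lambda_B(b)\bigr)=\sup_{y=\phi(a)+\phi(b)}\bigl(\lambda_A(a)\wedge\lambda_B(b)\bigr),
\]
and, using surjectivity of $\phi$ to write every decomposition $y=c+d$ in $V'$ as $c=\phi(a),d=\phi(b)$ with $a,b\in V$,
\[
\lambda_{\phi(A)+\phi(B)}(y)=\sup_{y=c+d}\bigl(\lambda_{\phi(A)}(c)\wedge\lambda_{\phi(B)}(d)\bigr)=\sup_{y=\phi(a)+\phi(b)}\bigl(\lambda_A(a)\wedge\lambda_B(b)\bigr),
\]
so $\lambda_{\phi(A+B)}=\lambda_{\phi(A)+\phi(B)}$. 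A symmetric argument with $\inf$ in place of $\sup$ and $\vee$ in place of $\wedge$ gives $\rho_{\phi(A+B)}=\rho_{\phi(A)+\phi(B)}$, completing the identity.

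The main obstacle I anticipate is the bookkeeping needed to justify that the two nested suprema can be collapsed into a single supremum over pairs $(a,b)$ with $y=\phi(a)+\phi(b)$: one must check that every decomposition $y=c+d$ in $V'$ really arises from some $x\in V$ with $x=a+b$, $\phi(a)=c$, $\phi(b)=d$, which is where surjectivity of $\phi$ enters in an essential way. A secondary technical point is that one must verify the homogeneity hypothesis needed to invoke Definition~\ref{def-1} for $\phi(A)+\phi(B)$; this follows because $\phi$ preserves the ordering relations used to define homogeneity between $\lambda_A,\rho_A,\lambda_B,\rho_B$ and hence transports homogeneity of the pair $(A,B)$ to the pair $(\phi(A),\phi(B))$. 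Once these two points are handled, the rest of the proof is essentially the verification already carried out in Proposition~\ref{sec4-1}.
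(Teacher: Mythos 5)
Your proposal is correct and follows essentially the same route as the paper: reduce to Proposition~\ref{sec4-1} applied to $A+B$, then verify the pointwise identity $\phi(A+B)=\phi(A)+\phi(B)$ by collapsing the nested suprema/infima using surjectivity of $\phi$ and the homogeneity of $A$ and $B$. If anything you are slightly more careful than the paper, which silently assumes that $A+B$ is an anti-CIF ideal of $V$ before invoking Proposition~\ref{sec4-1}, whereas you explicitly flag that this step requires the anti-analogue of the earlier sum-of-ideals theorem.
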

\begin{proof}
We already proved in Proposition~\ref{sec4-1} that $\phi(A+B)$ is an anti-CIF ideal of $V^{'}$. Therefore the only thing we need to prove is that  $\phi(A+B)=\phi(A)+\phi(B)$. Let $y\in V^{'}$, then
\begin{eqnarray*}
\lambda_{\phi(A+B)}(y)&=&\sup_{y=\phi(x)}\{\lambda_{A+B}(x)\}\\
&=&\sup_{y=\phi(x)}\{\sup_{x=a+b}\{\lambda_A(a)\wedge\lambda_B(b)\}\}\\
&=&\sup_{y=\phi(x)}\{\sup_{x=a+b}\{r_A(a)e^{i2\pi\omega_A(a)}\wedge r_B(b)e^{i2\pi\omega_B(b)} \}\}\\
&=&\sup_{y=\phi(x)}\{\sup_{x=a+b}\{(r_A(a)\wedge r_B(b))e^{i2\pi(\omega_A(a)\wedge\omega_B(b))}\}\}\\
&=&\sup_{y=\phi(a)+\phi(b)}\{(r_A(a)\wedge r_B(b))e^{i2\pi(\omega_A(a)\wedge\omega_B(b))}\}\ \ (A\ and\ B\ are\ homogenous)\\
&=&\sup_{y=m+n}\{(\sup_{m=\phi(a)}\{r_A(a)\}\wedge\sup_{n=\phi(b)}\{r_B(b)\})e^{i2\pi(\sup\limits_{m=\phi(a)}\{\omega_A(a)\}\wedge\sup\limits_{n=\phi(b)}\{\omega_B(b)\})}\}\\
&=&\sup_{y=m+n}\{(r_{\phi(A)}(m)\wedge r_{\phi(B)}(n))e^{i2\pi(\omega_{\phi(A)}(m)\wedge \omega_{\phi(B)}(n))}\}\\
&=&\sup_{y=m+n}\{r_{\phi(A)}(m)e^{i2\pi\omega_{\phi(A)}(m)}\wedge r_{\phi(B)}(n)e^{i2\pi\omega_{\phi(B)}(n)}\}\\
&=&\sup_{y=m+n}\{\lambda_{\phi(A)}(m)\wedge\lambda_{\phi(B)}(n)\}\\
&=&\lambda_{\phi(A)+\phi(B)}(y)
\end{eqnarray*}
and
\begin{eqnarray*}
\rho_{\phi(A+B)}(y)&=&\inf_{y=\phi(x)}\{\rho_{A+B}(x)\}\\
&=&\inf_{y=\phi(x)}\{\inf_{x=a+b}\{\rho_A(a)\vee\rho_B(b)\}\}\\
&=&\inf_{y=\phi(x)}\{\inf_{x=a+b}\{\hat{r}_A(a)e^{i2\pi\hat{\omega}_A(a)}\vee \hat{r}_B(b)e^{i2\pi\hat{\omega}_B(b)} \}\}\\
&=&\inf_{y=\phi(x)}\{\inf_{x=a+b}\{(\hat{r}_A(a)\vee \hat{r}_B(b))e^{i2\pi(\hat{\omega}_A(a)\vee\hat{\omega}_B(b))}\}\}\\
&=&\inf_{y=\phi(a)+\phi(b)}\{(\hat{r}_A(a)\vee\hat{r}_B(b))e^{i2\pi(\hat{\omega}_A(a)\vee\hat{\omega}_B(b))}\}\ \ (A\ and\ B\ are\ homogenous)\\
&=&\inf_{y=m+n}\{(\inf_{m=\phi(a)}\{\hat{r}_A(a)\}\vee\inf_{n=\phi(b)}\{\hat{r}_B(b)\})e^{i2\pi(\inf\limits_{m=\phi(a)}\{\hat{\omega}_A(a)\}\vee\inf\limits_{n=\phi(b)}\{\hat{\omega}_B(b)\})}\}\\
&=&\inf_{y=m+n}\{(\hat{r}_{\phi(A)}(m)\vee\hat{r}_{\phi(B)}(n))e^{i2\pi(\hat{\omega}_{\phi(A)}(m)\vee \hat{\omega}_{\phi(B)}(n))}\}\\
&=&\inf_{y=m+n}\{\hat{r}_{\phi(A)}(m)e^{i2\pi\hat{\omega}_{\phi(A)}(m)}\vee \hat{r}_{\phi(B)}(n)e^{i2\pi\hat{\omega}_{\phi(B)}(n)}\}\\
&=&\inf_{y=m+n}\{\rho_{\phi(A)}(m)\vee\rho_{\phi(B)}(n)\}\\
&=&\rho_{\phi(A)+\phi(B)}(y).
\end{eqnarray*}
So, $\phi(A+B)=\phi(A)+\phi(B)$ is an anti-CIF ideal of $V^{'}$.
\end{proof}
\section{Conclusion}
In this article, we define the complex intuitionistic fuzzy Lie sub-superalgebras and complex intuitionistic fuzzy ideals of Lie superalgebras. Then, we study some related properties of complex intuitionistic fuzzy Lie sub-superalgebras and complex intuitionistic fuzzy ideals. Finally, we define the image and preimage of complex intuitionistic fuzzy Lie sub-superalgebra under Lie superalgebra anti-homomorphism. The properties of anti-complex intuitionistic fuzzy Lie sub-superalgebras and anti-complex intuitionistic fuzzy ideals under anti-homomorphisms of Lie superalgebras are investigated.

\end{document}